\theoremstyle{plain}
\newtheorem{thm}{Theorem}
\newtheorem*{VAL}{Vizing's Adjacency Lemma (VAL)}
\newtheorem*{mainthm}{Main Theorem}
\newtheorem{lem}[thm]{Lemma}
\newtheorem*{HZconj}{Hilton--Zhao Conjecture}
\newcommand{\card}[1]{\left|#1\right|}
\newcommand{\floor}[1]{\lfloor#1\rfloor}
\def\adj{\leftrightarrow}
\def\vph{\varphi}
\def\hat{\widehat}
\newcommand\G{\mathcal{G}}
\newcommand\HH{\mathcal{H}}
\newcommand{\aside}[1]{\marginnote{\scriptsize{#1}}[0cm]}
\newcommand{\aaside}[2]{\marginnote{\scriptsize{#1}}[#2]}
\newcommand\Emph[1]{\emph{#1}\aside{#1}}
\newcommand\EmphE[2]{\emph{#1}\aaside{#1}{#2}}
\newcommand\bs{\boldsymbol}
\begin{document}
\author{Daniel W. Cranston\thanks{Department of Mathematics and Applied
Mathematics, Viriginia Commonwealth University, Richmond, VA;
\texttt{dcranston@vcu.edu}; 
The first author's research is partially supported by NSA Grant
H98230-15-1-0013.}
\and
Landon Rabern\thanks{
Franklin \& Marshall College, Lancaster, PA;
%LBD Data Solutions, Lancaster, PA;
\texttt{landon.rabern@gmail.com}}
}
\title{The Hilton--Zhao Conjecture is True\\ for Graphs with Maximum Degree 4}
\maketitle
\begin{abstract}
A simple graph $G$ is \emph{overfull} if $\card{E(G)}>\Delta\floor{\card{V(G)}/2}$.
By the pigeonhole principle, every overfull graph $G$ has $\chi'(G)>\Delta$.
The \emph{core} of a graph, denoted $G_\Delta$, is the subgraph
induced by its vertices of degree $\Delta$.  Vizing's Adjacency Lemma implies
that if $\chi'(G)>\Delta$, then $G_\Delta$ contains cycles.  Hilton and Zhao
conjectured that if $G$ is connected with $\Delta\ge 4$ and $G_\Delta$ has
maximum degree 2, then $\chi'(G)>\Delta$ precisely when $G$ is overfull.
We prove this conjecture for the case $\Delta=4$.
\end{abstract}
\date{}

\section{Introduction and Proof Outline}
A \EmphE{proper edge-coloring}{-2mm} of a graph $G$ assigns colors to its edges so
that edges receive distinct colors whenever they share an endpoint.  The
\EmphE{edge-chromatic number}{4mm} \emph{of $G$}, denoted $\chi'(G)$, is the smallest number
of colors that allows a proper edge-coloring of $G$.  Vizing showed that always
$\chi'(G)\le\Delta(G)+1$, where $\Delta(G)$ denotes the maximum degree of $G$.
(In this paper, all graphs are \EmphE{simple}{6mm}, which means that every pair of
vertices is joined by either 0 or 1 edges.)  Since always $\chi'(G)\ge
\Delta(G)$, we call a graph \EmphE{class 1}{1mm} when $\chi'(G)=\Delta(G)$ and call
it \EmphE{class 2}{5mm} when $\chi'(G)=\Delta(G)+1$.
For brevity, in what follows we write $\Delta$ to denote $\Delta(G)$, whenever
the context is clear.

Erd\H{o}s and Wilson~\cite{EW77} showed that almost every graph is class 1.  In
contrast, Holyer~\cite{holyer} showed that it is NP-hard to determine whether a
graph is class 1 or class 2.  As a result, most work in this area focuses on
proving sufficient conditions for a graph to be either class 1 or class 2.  
A \emph{$k$-vertex}\aside{$k/k^-$-vertex/ neighbor} is one of degree $k$, and a
\emph{$k^-$-vertex} is one of degree at most $k$.  A \emph{$k$-neighbor} (and
\emph{$k^-$-neighbor}) of a vertex $v$ is defined analogously.  
If $G$ is class 1, then $|E(G)|\le \Delta\floor{\card{V(G)}/2}$.  
This observation motivates the following definition.
A graph $G$ is \Emph{overfull} if $\card{E(G)} >
\Delta\floor{\card{V(G)}/2}$.  
Every overfull graph is class
2, since it has more edges than can appear in $\Delta$ color classes.
A graph $G$ is \Emph{critical} if $\chi'(G)>\Delta$ and
$\chi'(H)\le \Delta$ for every proper subgraph $H$.  An edge $e\in E(G)$ is a
\Emph{critical edge} of $G$ if $\chi'(G-e)<\chi'(G)$.
It is easy to show that
every class 2 graph $G$ contains a critical subgraph $H$ with the same maximum
degree as $G$.  Critical graphs are useful because they have more
structure than general graphs.  For example, Vizing proved the following.

\begin{VAL} 
Let $G$ be a class 2 graph with maximum degree $\Delta$.  If $vw$ is a
critical edge of $G$, then $w$ has at
least $\max\{\Delta+1-d(v),2\}$ $\Delta$-neighbors.
\end{VAL}

The \Emph{core} of a graph $G$, denoted $G_\Delta$, is the subgraph of
$G$ induced by $\Delta$-vertices.  VAL shows that if $G$ is class 2, then
$G_\Delta$ must contain cycles (this was also proved by
Fournier~\cite{fournier}).  So a natural question is which class 2
graphs have a core consisting of disjoint cycles.  Hilton and Zhao
\cite{HZ96} conjectured exactly when this happens.  Let $P^*$ denote
the Petersen graph with a vertex deleted. 

\tikzstyle{majorStyle}=[shape = circle, minimum size = 6pt, inner sep = 2.2pt, draw]
\tikzstyle{major}=[shape = circle, minimum size = 6pt, inner sep = 2.2pt, draw]
\tikzstyle{minorStyle}=[shape = rectangle, minimum size = 6pt, inner sep = 2.2pt, draw]
\tikzstyle{minor}=[shape = rectangle, minimum size = 6pt, inner sep = 2.2pt, draw]
\tikzstyle{labeledStyle}=[shape = rectangle, minimum size = 6pt, inner sep = 2.2pt, draw]
\tikzstyle{VertexStyle} = []
\tikzstyle{EdgeStyle} = []
%\tikzstyle{unlabeledStyle}=[shape = circle, minimum size = 6pt, inner sep = 1.2pt, draw, fill]
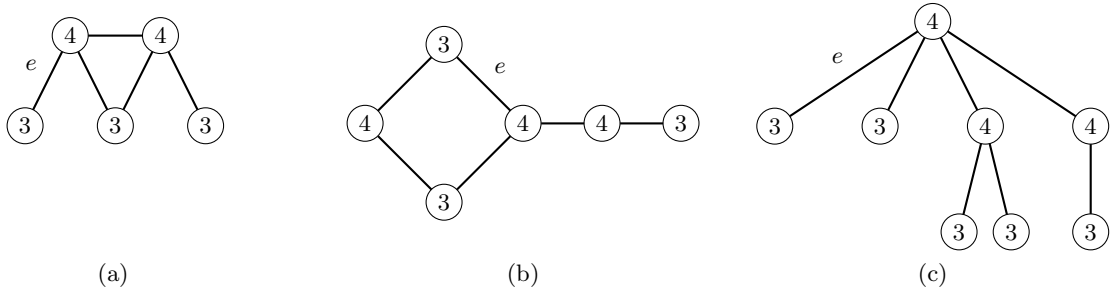
\begin{figure}[!b]
\renewcommand{\ttdefault}{ptm}
\begin{center}
\subfloat[]{\makebox[.33\textwidth]{
\begin{tikzpicture}[scale = 6]
\Vertex[style = major, x = 0.25, y = 0.75, L = \footnotesize {\textrm{3}}]{v0}
\Vertex[style = major, x = 0.35, y = 0.95, L = \footnotesize {\textrm{4}}]{v1}
\Vertex[style = major, x = 0.45, y = 0.75, L = \footnotesize {\textrm{3}}]{v2}
\Vertex[style = major, x = 0.55, y = 0.95, L = \footnotesize {\textrm{4}}]{v3}
\Vertex[style = major, x = 0.65, y = 0.75, L = \footnotesize {\textrm{3}}]{v4}
\Edge[label= \footnotesize {$e$}, , labelstyle={auto=left, fill=none}](v0)(v1)
\Edge[](v1)(v3)
\Edge[](v1)(v2)
\Edge[](v2)(v3)
\Edge[](v4)(v3)
\draw[white] (.20,.475)--(.20,0.49);
\end{tikzpicture}
}}
\subfloat[]{\makebox[.33\textwidth]{
\begin{tikzpicture}[rotate=90,scale=7]
%\begin{tikzpicture}[yshift=.253cm,xshift=2.8cm,rotate=90,scale=7]
\Vertex[style = major, x = 0.35, y = 0.80, L = \footnotesize {\textrm{3}}]{v0}
\Vertex[style = major, x = 0.65, y = 0.80, L = \footnotesize {\textrm{3}}]{v1}
\Vertex[style = major, x = 0.50, y = 0.65, L = \footnotesize {\textrm{4}}]{v2}
\Vertex[style = major, x = 0.50, y = 0.95, L = \footnotesize {\textrm{4}}]{v3}
\Vertex[style = major, x = 0.50, y = 0.50, L = \footnotesize {\textrm{4}}]{v4}
\Vertex[style = major, x = 0.50, y = 0.35, L = \footnotesize {\textrm{3}}]{v5}
\Edge[](v2)(v0)
\Edge[label= \footnotesize {$e$}, , labelstyle={auto=right, fill=none}](v2)(v1)
\Edge[](v3)(v0)
\Edge[](v3)(v1)
\Edge[](v4)(v2)
\Edge[](v5)(v4)
\draw[white] (.26,.55)--(.27,.55);
\end{tikzpicture}
}}
\subfloat[]{\makebox[.33\textwidth]{
\begin{tikzpicture}[scale = 7]
%\begin{scope}[xshift=.7cm,yshift=.1cm]
\Vertex[style = major, x = 0.5, y = 0.849, L = \footnotesize {\textrm{4}}]{v0}
\Vertex[style = major, x = 0.200, y = 0.650, L = \footnotesize {\textrm{3}}]{v1}
\Vertex[style = major, x = 0.400, y = 0.650, L = \footnotesize {\textrm{3}}]{v2}
\Vertex[style = major, x = 0.600, y = 0.650, L = \footnotesize {\textrm{4}}]{v3}
\Vertex[style = major, x = 0.800, y = 0.650, L = \footnotesize {\textrm{4}}]{v4}
\Vertex[style = major, x = 0.550, y = 0.449, L = \footnotesize {\textrm{3}}]{v6}
\Vertex[style = major, x = 0.649, y = 0.449, L = \footnotesize {\textrm{3}}]{v5}
\Vertex[style = major, x = 0.800, y = 0.449, L = \footnotesize {\textrm{3}}]{v7}
\Edge[label= \footnotesize {$e$}, , labelstyle={auto=right, fill=none}](v0)(v1)
\Edge[](v2)(v0)
\Edge[](v3)(v0)
\Edge[](v4)(v0)
\Edge[](v5)(v3)
\Edge[](v6)(v3)
\Edge[](v7)(v4)
\end{tikzpicture}
}}
%
%\end{scope}
\end{center}
\captionsetup{%  settings just for this caption
  margin = 90pt
}
\caption{Each configuration cannot appear in a class 2 graph\\ in $\G_4$.
(The number at each vertex specifies its degree in $G$.)
\label{fig:3reducibles}}
\end{figure}
%Vertices drawn as circles have degree 4 in $G$ and vertices drawn as rectangles
%have degree 3 in $G$.  

\begin{HZconj}
If $G$ is a connected graph with $\Delta\ge 3$ and with core of maximum degree
at most 2, then $G$ is class 2 if and only if $G$ is $P^*$ or $G$ is overfull.
\end{HZconj}

David and Gianfranco Cariolaro~\cite{cariolaro03} proved this
conjecture when $\Delta=3$.  Kral', Sereni, and
Stiebitz~\cite[p.~57--63]{SSTF-book} gave an alternate proof.
An easy counting argument shows that every graph satisfying the hypotheses of
the conjecture has average degree at most $\Delta-1+\frac{\Delta-1}{2\Delta-3}$;
when $\Delta=3$, this is $\frac83$.  By Lemma~\ref{lem0} below, any
counterexample to the conjecture must be critical.  Thus, the case $\Delta=3$
is also implied by our result~\cite{CRfixer3} that every critical graph with
$\Delta=3$ (other than the Petersen graph with a vertex deleted) has average
degree at least $\frac{46}{17}\approx2.706$.

In this paper, we prove the conjecture when $\Delta=4$. 
Let $\mathcal{G}_k$ denote the class of connected graphs with maximum degree
$k$ in which the core has maximum degree at most 2; that is, each $k$-vertex 
has at most two $k$-neighbors.  Let $\HH_k$ denote the class of connected graphs $G$ such
that (i) $G$ has maximum degree $k$, (ii) $G$ has minimum degree $k-1$, (iii)
$G_\Delta$ is a disjoint union of cycles, and (iv) every vertex of $G$ has a
$\Delta$-neighbor.  Note that $\HH_k\subseteq \G_k$.
To prove our main result, we use a
lemma of Hilton and Zhao~\cite{HZ92}, implied by VAL.
To keep this paper self-contained, we include a proof.

\begin{lem}
If $G\in \G_k$ with $k\ge3$ and $\chi'(G)>k$, then $G\in \HH_k$ and $G$ is
critical.
\label{HZlem}
\label{lem0}
\end{lem}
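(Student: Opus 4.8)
The plan is to reduce to a critical subgraph and then apply Vizing's Adjacency Lemma (VAL) repeatedly. Since $\chi'(G)>k=\Delta(G)$, the graph $G$ is class~2, so it contains a critical subgraph $H$ with $\Delta(H)=\Delta(G)=k$. Because degrees cannot increase when passing from $G$ to a subgraph, every $k$-vertex of $H$ is a $k$-vertex of $G$, and every $k$-neighbor in $H$ is a $k$-neighbor in $G$; hence the hypothesis $G\in\G_k$ transfers to $H$, giving that each $k$-vertex of $H$ has at most two $k$-neighbors in $H$. I would first prove $H\in\HH_k$, and only afterwards show $G=H$ (so that $G$ itself is critical). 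Isolated vertices, if any, are irrelevant and may be discarded at the outset.

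For the membership $H\in\HH_k$, two of the four defining conditions come almost for free from VAL. Applying VAL to any edge incident to a vertex $x$ shows that every non-isolated vertex of $H$ has at least $2$ $k$-neighbors; in particular condition (iv) (each vertex has a $\Delta$-neighbor) holds. For a $k$-vertex this lower bound of $2$ combines with the upper bound of $2$ just obtained to show that each $k$-vertex has \emph{exactly} two $k$-neighbors, so the core $H_\Delta$ is $2$-regular, i.e.\ a disjoint union of cycles; this is condition (iii). Condition (i) is immediate, and once the minimum-degree bound below is in hand, condition (ii) follows because a $k$-vertex has $k-2\ge 1$ non-$k$-neighbors, forcing a $(k-1)$-vertex to exist.

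The main obstacle is the minimum-degree condition $\delta(H)\ge k-1$. Here I would argue by contradiction: suppose some vertex $v$ has $d_H(v)\le k-2$. For each neighbor $w$ of $v$, VAL applied to the edge $vw$ forces $w$ to have at least $k+1-d_H(v)\ge 3$ $k$-neighbors; since a $k$-vertex of $H$ has at most two $k$-neighbors, $w$ cannot be a $k$-vertex. Choosing a $k$-vertex $u$ among the (at least three) $k$-neighbors of $w$ and applying VAL to $uw$ then pins down $d_H(w)=k-1$. The key move is to apply VAL to the \emph{same} edge $vw$, but now reading off information about $v$: this yields that $v$ has at least $2$ $k$-neighbors. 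But every neighbor of $v$ has degree $k-1$, so $v$ has \emph{no} $k$-neighbor --- a contradiction. This double use of VAL on a single edge (once toward $w$, once toward $v$) is the crux, and it is what makes the bound work uniformly for all $k\ge 3$ rather than only for small $k$.

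Finally I would upgrade $H\in\HH_k$ to $G=H$. Let $e=xy$ be any edge of $G$ not in $H$. The minimum-degree bound gives $d_H(x),d_H(y)\ge k-1$, while $d_G(x),d_G(y)\le k$ forces $d_H(x)=d_H(y)=k-1$ and makes $x$ and $y$ $k$-vertices of $G$. But $x$ already has at least two $k$-neighbors inside $H$ (which remain $k$-vertices in $G$), and $y$ is a further $k$-neighbor of $x$ in $G$, giving $x$ at least three $k$-neighbors in $G$ and contradicting $G\in\G_k$. Hence $G$ has no edge outside $H$, so $G=H$; this shows $G$ is critical and $G\in\HH_k$, completing the proof.
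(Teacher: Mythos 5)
Your overall strategy---pass to a critical subgraph $H$, transfer the $\G_k$ hypothesis to $H$, use VAL to force $\delta(H)\ge k-1$ and exactly two $k$-neighbors per $k$-vertex, and then argue $G=H$---is the same as the paper's, and the first stages are correct. (Your minimum-degree argument is a slightly roundabout version of the paper's: the paper simply notes that a $(k-2)^-$-vertex $v$ has a $k$-neighbor $w$ by VAL, and that this $w$ would then need at least three $k$-neighbors, contradicting membership in $\G_k$; your detour through establishing $d_H(w)=k-1$ for every neighbor $w$ is valid but unnecessary.)

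The gap is in the final step. You take an arbitrary edge $e=xy\in E(G)\setminus E(H)$ and invoke ``the minimum-degree bound gives $d_H(x),d_H(y)\ge k-1$,'' but this presupposes $x,y\in V(H)$. Nothing you have proved rules out $V(H)\subsetneq V(G)$, and if $y\notin V(H)$ the argument breaks exactly where you need it: you cannot conclude $d_G(y)=k$, so $y$ need not be a $k$-neighbor of $x$ in $G$, and the count ``$x$ has at least three $k$-neighbors in $G$'' loses its third member. Concretely, if $y$ were a low-degree vertex outside $H$ attached to $x$, then $x$ would be a $k$-vertex of $G$ with $d_H(x)=k-1$ and could still have only its two $k$-neighbors inherited from $H$, so no contradiction arises at $x$. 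The paper closes this case by relocating the contradiction: since $x$ is a $k$-vertex of $G$ but only a $(k-1)$-vertex of $H$, any $k$-neighbor $u$ of $x$ in $H$ (which exists by VAL) has $x$ as one of its at most two $k$-neighbors in $G$, yet $x$ is not a $k$-vertex of $H$, so $u$ has at most one $k$-neighbor in $H$---contradicting VAL applied in $H$. Adding that case disposes of $V(H)\subsetneq V(G)$ entirely, after which your edge argument correctly handles $E(H)\subsetneq E(G)$ and the proof is complete.
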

\begin{proof}
Let $G$ satisfy the hypotheses and let $H$ be a $k$-critical subgraph of $G$.
Suppose $H$ has a $(k-2)^-$-vertex $v$.  By VAL, $v$ has a $k$-neighbor
$w$.  Now $w$ has at least $k+1-d(v)\ge k+1-(k-2)=3$ neighbors of degree $k$, a
contradiction, since $H\in \G_k$.  Thus, $H$ has no $(k-2)^-$-vertex.

Suppose that $V(H)\subsetneq V(G)$.  Choose $v\in V(H)$ and $w\in
V(G)\setminus V(H)$ such that $w\in N_G(v)$.  If $d_G(v)\le k-1$, then
$d_H(v)\le k-2$, a contradiction.  So $d_G(v)=k$.  Since $H$ is critical, $v$
has a $k$-neighbor $w$.  But now $w$ has at most two $k$-neighbors in $G$ (since
$G\in G_k$), one of which is $v$.  So $w$ has at most one $k$-neighbor in $H$,
contradicting VAL.  Thus, $V(H)=V(G)$.  Finally, suppose there exists $e\in
E(G)\setminus E(H)$.  Now either $H$ has a $(k-2)^-$-vertex or some $k$-vertex
in $H$ has at most one neighbor $w$ in $H$ with $d_H(w)=k$; both are
contradictions.  Thus, $E(G)=E(H)$.  So $G$ is critical.  Now VAL implies that
every vertex has at least two $\Delta$-neighbors. Hence, $G\in \HH_k$.
\end{proof}

Now we can prove our Main Theorem, subject to three reducibility lemmas, which we
state and prove in the next section.  In short, the lemmas say that a graph in
$\HH_4$ is class 1 whenever it contains at least one of the configurations in
Figure~\ref{fig:3reducibles} (not necessarily induced).

\begin{mainthm}
A connected graph $G$ with $\Delta = 4$ and with core of maximum degree at most
2 is class 2 if and only if $G$ is $K_5-e$.  %Since $K_5-e$ is overfull, 
This implies the case $\Delta=4$ of the Hilton--Zhao Conjecture.
\end{mainthm}

\begin{proof}
Let $G$ be a graph with $\Delta=4$ and with core of maximum degree at most 2.
By Lemma~\ref{HZlem}, we assume $G\in \HH_4$.
%Let $G$ be a graph in $\HH_4$.  
Note that every 4-vertex in $G$ has exactly two
3-neighbors and two 4-neighbors.  Let $v$ denote a 4-vertex and let
$w_1,\ldots,w_4$ denote its neighbors, where $d(w_1)=d(w_2)=3$ and $d(w_3)=d(w_4)=4$.
When vertices $x$ and $y$ are adjacent, we write $x\adj y$.  We assume that $G$
contains no configuration in Figure~\ref{fig:3reducibles} and show that
$G$ is $K_5-e$.  
	
First suppose that $v$ has a 3-neighbor and a 4-neighbor that are adjacent.  By
symmetry, assume that $w_2\adj w_3$.  Since
Figure~\ref{fig:3reducibles}(a) is forbidden, we have $w_3\adj w_1$. 
Now consider $w_4$.  If $w_4$ has a 3-neighbor distinct from $w_1$ and $w_2$,
then we have a copy of Figure~\ref{fig:3reducibles}(b).  Hence $w_4\adj w_1$ and
$w_4\adj w_2$.  If $w_3\adj w_4$, then $G$ is $K_5-e$.  Suppose not, and let
$x$ be a 4-neighbor of $w_4$.  Since $G$ has no copy of
Figure~\ref{fig:3reducibles}(b), $x$ must be adjacent to $w_1$ and $w_2$.  This
is a contradiction, since $w_1$ and $w_2$ are 3-vertices, but now each has at
least four neighbors.  Hence, each of $w_1$ and $w_2$ is
non-adjacent to each of $w_3$ and $w_4$.
	
Now consider the 3-neighbors of $w_3$ and $w_4$.  If $w_3$ and $w_4$ have zero
or one 3-neighbors in common, then we have a copy of
Figure~\ref{fig:3reducibles}(c).  Otherwise they have two 3-neighbors in
common, so we have a copy of Figure~\ref{fig:3reducibles}(b).  
\end{proof}

We first announced the Main Theorem in~\cite{fixer-survey}, and included the
proof above.  But we did not include proofs of the reducibility lemmas that we
present in the next section.
  
\section{Reducibility Lemmas}
In this section we prove the reducibility of the three configurations in
Figure~\ref{fig:3reducibles}.  More precisely, suppose that $G\in G_4$ and $G$
contains one of these configurations, $H$, as a subgraph, not necessarily
induced (the number at each vertex of $H$ denotes its degree in $G$).  We show
that $\chi'(G)=4$.  If not, then Lemma~\ref{lem0} implies that $G\in\HH_4$ and
$G$ is critical.  Thus, $\chi'(G-e)=4$, where $e$ is the edge denoted in the
figure.  For convenience, we write \Emph{coloring} to mean edge-coloring with
colors 0, 1, 2, 3.
Since $\chi'(G-e)=4$, we begin with an arbitrary coloring $\vph$ of $G-e$.
A priori, $\vph$ could restrict to many possible colorings of $H-e$.  Starting
from $\vph$, we use repeated Kempe swaps (see below) to get a coloring of
$G-e$ that restricts to one of a few colorings of $H-e$.  We conclude
by modifying the coloring of $H-e$ to transform the coloring of $G-e$ to a
coloring of $G$.  At each step, we call the current coloring $\vph$.  So, to
change the color of some edge $xy$ to $i$, we ``let $\vph(xy)=i$''.
If $\vph$ uses color $i$ on an edge incident to vertex $v$, then $v$ \EmphE{sees
$i$}{-4mm}; otherwise $v$ \EmphE{misses $i$}{0mm}.
In the figures that follow, we typically draw all edges incident to vertices
of $H$.  However, only the edges shown in Figure~\ref{fig:3reducibles} are
considered edges of $H$; the others are \EmphE{pendant edges}{-5mm}.

An \emph{$(i,j)$-chain at a vertex $v$}\aside{$(i,j)$-chain/ linked/ unlinked} 
is the component containing $v$ of the
subgraph induced by edges colored $i$ and $j$.  If two vertices $v$ and $w$ are
in the same $(i,j)$-chain, then $v$ and $w$ are \emph{$(i,j)$-linked};
otherwise they are \emph{$(i,j)$-unlinked}.  Each $(i,j)$-chain $P$ is a
path or an even cycle.  If $P$ is a path that starts in $V(H)$, then $P$ either
ends in $V(H)$ or ends in $V(G)\setminus V(H)$.  In the latter case, $P$ \EmphE{ends
at $\infty$}{-4mm}.  To \emph{recolor}\aaside{recolor chain}{4mm} an
$(i,j)$-chain $P$ means to use color $i$ on each edge colored $j$ and vice
versa (this is typically called a Kempe swap, but here we rarely use that
term).  When $P$ contains a vertex $v$, we also say that we \EmphE{$(i,j)$-swap at
$v$}{2.5mm}.
Recoloring any chain in a coloring of $G-e$ yields another coloring
of $G-e$.  If each $(i,j)$-chain in $G-E(H)$ that starts in $V(H)$ ends at
$\infty$, then we can recolor pendant edges independently, by recoloring the
chain beginning with each pendant edge.  If, instead, an $(i,j)$-chain beginning
in $V(H)$ ends in $V(H)$, then its end edges (and endpoints) are paired, and
recoloring one edge necessarily recolors the other.  Choose $v,w\in V(H)$ that
each begin an $(i,j)$-chain in $G-E(H)$; call the chains $P_v$ and $P_w$.  If
$P_v$ and $P_w$ both end at $\infty$, the we can simulate that $P_v$ ends at
$w$, so $P_v=P_w$.  To do so, whenever we recolor $P_v$ we also recolor $P_w$.
Thus, for any pair $(i,j)\subset \{0,1,2,3\}$, we can assume that at most one
$(i,j)$-chain in $G-E(H)$ that starts in $V(H)$ ends at $\infty$.

During our process of modifying $\vph$, when we
recolor the $(i,j)$-chain $P$ at $v$, we might want to recolor $P$
but realize that this is no help if $P$ ends at $x$.  Similarly, we
might also be happy to recolor the $(i,j)$-chain $Q$ at $w$, but realize this
also is no help if $Q$ ends at $x$.  Fortunately, we can make progress, since
it is impossible for both $P$ and $Q$ to end at $x$.  To get more control when
recoloring, we frequently consider all $(i,j)$-chains in $G-E(H)$ that begin at
vertices of $V(H)$.  Now our analysis is similar, but more extensive. 
This approach is possible only when we know the color on every edge of $H-e$.
We discuss this general technique further in~\cite{fixer-survey}.

In the proofs of the reducibility lemmas, while handling one case, we often
reduce to a case handled previously.  An alternate approach would be to assume
in each case that $G-e$ has no 4-edge-coloring satisfying the hypothesis of any
previous case.  However, the approach we take has the advantage that it is more
easily adapted to give an efficient coloring algorithm.

\begin{figure}[!b]
\centering
\subfloat[]{\makebox[.48\textwidth]{
\begin{tikzpicture}[scale = 12]
\tikzstyle{VertexStyle} = []
\tikzstyle{EdgeStyle} = []
\tikzstyle{unlabeledStyle}=[shape = circle, minimum size = 6pt, inner sep = 1.2pt, draw]
\tikzstyle{labeledStyle}=[shape = circle, minimum size = 14pt, inner sep =
1.2pt, draw]

\Vertex[style = labeledStyle, x = 0.400, y = 0.800, L = \footnotesize {$v$}]{v0}
\Vertex[style = labeledStyle, x = 0.300, y = 0.650, L = \footnotesize {$z$}]{v1}
\Vertex[style = labeledStyle, x = 0.600, y = 0.800, L = \footnotesize {$w$}]{v2}
\Vertex[style = labeledStyle, x = 0.500, y = 0.650, L = \footnotesize {$x$}]{v3}
\Vertex[style = labeledStyle, x = 0.700, y = 0.650, L = \footnotesize {$y$}]{v4}
\Vertex[style = unlabeledStyle, x = 0.250, y = 0.570, L = \small {}]{v5}
\Vertex[style = unlabeledStyle, x = 0.350, y = 0.570, L = \small {}]{v6}
\Vertex[style = unlabeledStyle, x = 0.500, y = 0.570, L = \small {}]{v7}
\Vertex[style = unlabeledStyle, x = 0.350, y = 0.880, L = \small {}]{v8}
\Vertex[style = unlabeledStyle, x = 0.650, y = 0.880, L = \small {}]{v9}
\Vertex[style = unlabeledStyle, x = 0.650, y = 0.570, L = \small {}]{v10}
\Vertex[style = unlabeledStyle, x = 0.750, y = 0.570, L = \small {}]{v11}
\Edge[label = \footnotesize {$3$}, labelstyle={xshift=-.08in, fill=none}](v0)(v3)
\Edge[label = \footnotesize {$1$}, labelstyle={xshift=.08in, fill=none}](v2)(v3)
\Edge[label = \footnotesize {$2$}, labelstyle={yshift=.08in, fill=none}](v2)(v0)
\Edge[label = \footnotesize {}, labelstyle={auto=right, fill=none}](v1)(v0)
\Edge[label = \footnotesize {$3$}, labelstyle={xshift=.08in, fill=none}](v4)(v2)
\Edge[label = \footnotesize {$0$}, labelstyle={xshift=-.08in, fill=none}](v5)(v1)
\Edge[label = \footnotesize {$1$}, labelstyle={xshift=.08in, fill=none}](v6)(v1)
\Edge[label = \footnotesize {$2$}, labelstyle={xshift=.08in, fill=none}](v7)(v3)
\Edge[label = \footnotesize {$0$}, labelstyle={xshift=-.08in, fill=none}](v8)(v0)
\Edge[label = \footnotesize {$0$}, labelstyle={xshift=.08in, fill=none}](v9)(v2)
\Edge[label = \footnotesize {$1$}, labelstyle={xshift=-.08in, fill=none}](v10)(v4)
\Edge[label = \footnotesize {$2$}, labelstyle={xshift=.08in, fill=none}](v11)(v4)
\end{tikzpicture}
}}
\subfloat[]{\makebox[.48\textwidth]{
\begin{tikzpicture}[scale = 12]
\tikzstyle{VertexStyle} = []
\tikzstyle{EdgeStyle} = []
\tikzstyle{unlabeledStyle}=[shape = circle, minimum size = 6pt, inner sep = 1.2pt, draw]
\tikzstyle{labeledStyle}=[shape = circle, minimum size = 14pt, inner sep =
1.2pt, draw]
%\begin{scope}[xshift=.25in]
\Vertex[style = labeledStyle, x = 0.400, y = 0.800, L = \footnotesize {$v$}]{v0}
\Vertex[style = labeledStyle, x = 0.300, y = 0.650, L = \footnotesize {$z$}]{v1}
\Vertex[style = labeledStyle, x = 0.600, y = 0.800, L = \footnotesize {$w$}]{v2}
\Vertex[style = labeledStyle, x = 0.500, y = 0.650, L = \footnotesize {$x$}]{v3}
\Vertex[style = labeledStyle, x = 0.700, y = 0.650, L = \footnotesize {$y$}]{v4}
\Vertex[style = unlabeledStyle, x = 0.250, y = 0.570, L = \footnotesize {}]{v5}
\Vertex[style = unlabeledStyle, x = 0.350, y = 0.570, L = \footnotesize {}]{v6}
\Vertex[style = unlabeledStyle, x = 0.500, y = 0.570, L = \footnotesize {}]{v7}
\Vertex[style = unlabeledStyle, x = 0.350, y = 0.880, L = \footnotesize {}]{v8}
\Vertex[style = unlabeledStyle, x = 0.650, y = 0.880, L = \small {}]{v9}
\Vertex[style = unlabeledStyle, x = 0.650, y = 0.570, L = \small {}]{v10}
\Vertex[style = unlabeledStyle, x = 0.750, y = 0.570, L = \small {}]{v11}
\Edge[label = \footnotesize {$1$}, labelstyle={xshift=-.08in, fill=none}](v0)(v3)
\Edge[label = \footnotesize {$2$}, labelstyle={xshift=.08in, fill=none}](v2)(v3)
\Edge[label = \footnotesize {$0$}, labelstyle={yshift=.08in, fill=none}](v2)(v0)
\Edge[label = \footnotesize {$3$}, labelstyle={xshift=-.08in, fill=none}](v1)(v0)
\Edge[label = \footnotesize {$3$}, labelstyle={xshift=.08in, fill=none}](v4)(v2)
\Edge[label = \footnotesize {$0$}, labelstyle={xshift=-.08in, fill=none}](v5)(v1)
\Edge[label = \footnotesize {$1$}, labelstyle={xshift=.08in, fill=none}](v6)(v1)
\Edge[label = \footnotesize {$0$}, labelstyle={xshift=.08in, fill=none}](v7)(v3)
\Edge[label = \footnotesize {$2$}, labelstyle={xshift=-.08in, fill=none}](v8)(v0)
\Edge[label = \footnotesize {$1$}, labelstyle={xshift=.08in, fill=none}](v9)(v2)
\Edge[label = \footnotesize {$1$}, labelstyle={xshift=-.08in, fill=none}](v10)(v4)
\Edge[label = \footnotesize {$2$}, labelstyle={xshift=.08in, fill=none}](v11)(v4)
%\end{scope}
%
\end{tikzpicture}
}}

\caption{
(a) A coloring of $G-vz$ in Case 1.
(b) A coloring of $G$ in Case 2.\label{lem1case1}}
\end{figure}
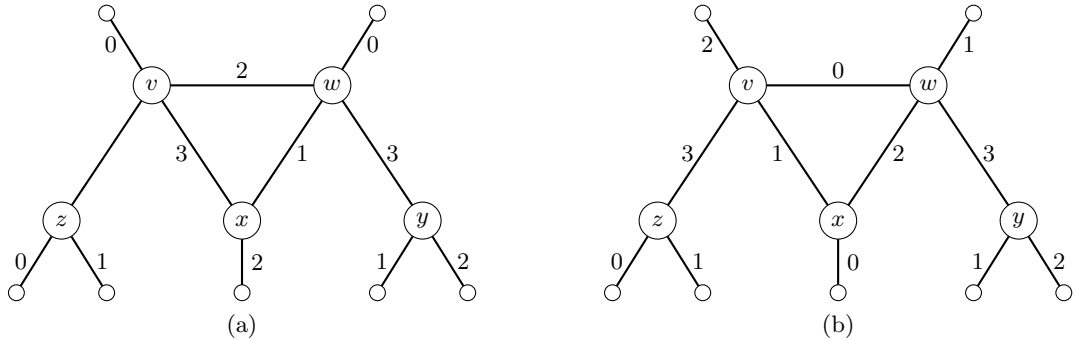

\begin{lem}
% lem 1
%
%If $\vph(uu')=0$ (where 0 is seen by both $u$ and $y$) then $\chi'(G)=4$.
Suppose that $\Delta=4$ and $G$ has the configuration in
Figure~\ref{fig:3reducibles}(a) (reproduced in Figure~\ref{lem1case1}, if we
ignore the colors there). %without the colors shown.
If $\chi'(G-vz)=4$, then $\chi'(G)=4$.
\label{lem1}
\end{lem}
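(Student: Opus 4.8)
The plan is to extend a proper $4$-edge-coloring of $G-vz$ to all of $G$ by a Vizing-style fan-and-Kempe-chain argument, using the structure of the configuration in Figure~\ref{lem1case1}: the two adjacent $4$-vertices $v$ and $w$, their common $3$-neighbor $x$, and the $3$-vertices $z$ (joined to $v$ by the uncolored edge $e=vz$) and $y$ (joined to $w$). Start from an arbitrary coloring $\vph$ of $G-vz$. Since $d(v)=4$ and $vz$ is uncolored, $v$ is missing exactly one color; since $d(z)=3$, vertex $z$ is missing exactly two colors. If $v$ and $z$ share a missing color, color $vz$ with it and stop. So assume they do not; then the single color missing at $v$ appears on one of the two pendant edges at $z$.

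First I would reduce to a canonical coloring of $H-vz$. Using the observation from the start of this section---that for each color pair $(i,j)$ at most one $(i,j)$-chain of $G-E(H)$ beginning in $V(H)$ ends at $\infty$, so the pendant edges may be recolored essentially independently---together with a relabeling of the four colors, I would bring the restriction of $\vph$ to $H-vz$ to the coloring shown in Figure~\ref{lem1case1}(a). In particular I may assume that $v$ is missing color $1$, that $z$ is missing $\{2,3\}$, and that the edges $vx,vw,wx,wy$ and the pendants carry the colors drawn there.

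Now to color $vz$ with $1$ it suffices to remove color $1$ from $z$. For $\gamma\in\{2,3\}$ let $P_z$ be the $(1,\gamma)$-chain beginning at the color-$1$ pendant of $z$; since $z$ has no $\gamma$-edge, $P_z$ starts along this pendant. If $v\notin P_z$, I recolor $P_z$, so that $z$ now misses $1$, and set $\vph(vz)=1$. Thus I am done unless $v$ and $z$ are $(1,\gamma)$-linked for \emph{both} $\gamma=2$ and $\gamma=3$. This obstructed case (Case~2 in the figure) is where the real work lies, and is the step I expect to be the main obstacle. Here the two forced chains pin down much of the coloring near $v$, $w$, and $x$, and I would use the second $4$-vertex $w$ and the shared neighbor $x$ to recolor the edges $vw,vx,wx$ (and adjust the relevant pendants) so as to reach the coloring of $G$ shown in Figure~\ref{lem1case1}(b), in which $vz=3$ and every vertex of $H$ is properly colored. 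The delicate point is that this local recoloring must be reconciled with the pendant chains running out into the rest of $G$: I must check that it creates no conflict outside $H$, which is exactly what the ``at most one chain ends at $\infty$'' reduction is designed to guarantee.
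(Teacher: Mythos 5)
Your setup matches the paper's: you normalize so that $v$ misses $1$ and $z$ misses $\{2,3\}$, observe that a common missing color finishes immediately, and note that otherwise $v$ and $z$ must be $(1,2)$- and $(1,3)$-linked. From that point on, however, the proposal is a statement of intent rather than a proof, and the two concrete claims it does make are not justified. First, you cannot bring the restriction of $\vph$ to $H-vz$ to the coloring of Figure~\ref{lem1case1}(a) by relabeling colors and recoloring pendant edges independently. The color $0$ (the one seen by both $v$ and $z$) may sit on $vv'$, on $vx$, or on $vw$, and these are three genuinely inequivalent situations: no permutation of colors and no Kempe swap confined to $G-E(H)$ moves $0$ off an edge of $H$ onto the pendant $vv'$. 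The paper must treat each location as a separate case, and even in the case $\vph(vv')=0$ reaching the configuration of Figure~\ref{lem1case1}(a) requires a chain of forced deductions (e.g., ruling out $\vph(wx)=0$ because a $(1,2)$-swap at $x$ would make $v$ and $z$ $(1,3)$-unlinked), not a free normalization. The ``at most one $(i,j)$-chain ends at $\infty$'' device only lets you pair up chain endpoints lying in $V(H)$; it does not let you prescribe the colors of the pendant edges at will.

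Second, the heart of the lemma is exactly the part you defer. In the obstructed case you say you ``would use'' $w$ and $x$ to recolor $vw$, $vx$, $wx$ so as to reach the coloring of Figure~\ref{lem1case1}(b), and that the ``delicate point'' of reconciling this with the chains leaving $H$ remains to be checked. That check is the entire content of the paper's Cases 1--3: each runs through several rounds of analysis of $(i,j)$-chains at $w$, $x$, and $y$, determining where each chain can end, recoloring, and producing a different explicit extension to $G$ in each subcase. The coloring in Figure~\ref{lem1case1}(b) is the endpoint of only one subcase of Case 2, not a universal target. As written, the proposal identifies the correct framework but leaves the actual argument unproved.
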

\begin{proof}
We start with a coloring of $G-vz$ and assume that $G$ has no
coloring, which leads to a contradiction.  We denote by $v'$, $w'$, and $x'$ the
sole unlabeled neighbors of $v$, $w$, and $x$, respectively.
By symmetry, we assume that $z$ sees 0 and 1, and $v$ sees 0, 2, and 3. 
We repeatedly use that $v$ and $z$ must be (1,2)- and (1,3)-linked.
We consider three cases: color 0 is used on $vv'$, $vx$, or $vw$.

\textbf{Case 1: 0 is used on $\bs{vv'}$.}
Let $\vph$ be a coloring of $G-vz$, and
suppose $\vph(vv')=0$.  By symmetry, assume that $\vph(vw)=2$ and
$\vph(vx)=3$, as in Figure~\ref{lem1case1}(a).  We show that we may assume all edges are
colored as in Figure~\ref{lem1case1}(a).
Suppose $\vph(wx)=0$.  Since $v$ and
$z$ are (1,3)-linked, $\vph(xx')=1$.  Now we (1,2)-swap at $x$, which
makes $v$ and $z$ (1,3)-unlinked, a contradiction.  So $\vph(wx)=1$.  Since $v$
and $z$ are (1,2)-linked, $\vph(xx')=2$.

Suppose $\vph(wy)=0$, so $\vph(ww')=3$.  Now $y$ must see 1; otherwise a
(0,1)-swap at $y$ recolors $wx$ with 0, and $v$ and $z$ become
(1,3)-unlinked, a contradiction.  So $y$ misses 2 or 3.  Now a (1,2)-
or (1,3)-swap at $y$ makes $y$ miss 1, but nothing else has changed.  So we
are done.

So assume $\vph(wy)=3$ and $\vph(ww')=0$.  Now $y$ must see 1, since $v$ and $z$
are (1,3)-linked.  If $y$ misses 2, then a (1,2)-swap at $y$ makes $y$ miss
1, a contradiction.  So $y$ sees 2 and 1, and misses 0.  Consider the
(0,1)-chain $P$ at $y$.  $P$ must contain either (a) $w'w, wx$ or (b) $v'v$;
otherwise we (0,1)-swap at $y$ and are done.  If (a), then we (0,1)-swap at $y$
and are done, since now $v$ and $z$ are (1,3)-unlinked.  So assume (b).  Now after a
(0,1)-swap at $y$, let $\vph(vx)=0$ and $\vph(vz)=3$.
This completes Case 1.

\begin{figure}[!b]
\centering
\subfloat[]{\makebox[.48\textwidth]{
\begin{tikzpicture}[scale = 12]
\tikzstyle{VertexStyle} = []
\tikzstyle{EdgeStyle} = []
\tikzstyle{unlabeledStyle}=[shape = circle, minimum size = 6pt, inner sep = 1.2pt, draw]
\tikzstyle{labeledStyle}=[shape = circle, minimum size = 14pt, inner sep = 1.2pt, draw]
\Vertex[style = labeledStyle, x = 0.400, y = 0.800, L = \footnotesize {$v$}]{v0}
\Vertex[style = labeledStyle, x = 0.300, y = 0.650, L = \footnotesize {$z$}]{v1}
\Vertex[style = labeledStyle, x = 0.600, y = 0.800, L = \footnotesize {$w$}]{v2}
\Vertex[style = labeledStyle, x = 0.500, y = 0.650, L = \footnotesize {$x$}]{v3}
\Vertex[style = labeledStyle, x = 0.700, y = 0.650, L = \footnotesize {$y$}]{v4}
\Vertex[style = unlabeledStyle, x = 0.250, y = 0.570, L = \small {}]{v5}
\Vertex[style = unlabeledStyle, x = 0.350, y = 0.570, L = \small {}]{v6}
\Vertex[style = unlabeledStyle, x = 0.500, y = 0.570, L = \small {}]{v7}
\Vertex[style = unlabeledStyle, x = 0.350, y = 0.880, L = \small {}]{v8}
\Vertex[style = unlabeledStyle, x = 0.650, y = 0.880, L = \small {}]{v9}
\Vertex[style = unlabeledStyle, x = 0.650, y = 0.570, L = \small {}]{v10}
\Vertex[style = unlabeledStyle, x = 0.750, y = 0.570, L = \small {}]{v11}
\Edge[label = \footnotesize {$0$}, labelstyle={xshift=-.08in, fill=none}](v0)(v3)
\Edge[label = \footnotesize {$2$}, labelstyle={xshift=.08in, fill=none}](v2)(v3)
\Edge[label = \footnotesize {$3$}, labelstyle={yshift=.08in, fill=none}](v2)(v0)
\Edge[label = \footnotesize {}, labelstyle={auto=right, fill=none}](v1)(v0)
\Edge[label = \footnotesize {$0$}, labelstyle={xshift=.08in, fill=none}](v4)(v2)
\Edge[label = \footnotesize {$0$}, labelstyle={xshift=-.08in, fill=none}](v5)(v1)
\Edge[label = \footnotesize {$1$}, labelstyle={xshift=.08in, fill=none}](v6)(v1)
\Edge[label = \footnotesize {$1$}, labelstyle={xshift=.08in, fill=none}](v7)(v3)
\Edge[label = \footnotesize {$2$}, labelstyle={xshift=-.08in, fill=none}](v8)(v0)
\Edge[label = \footnotesize {$1$}, labelstyle={xshift=.08in, fill=none}](v9)(v2)
\Edge[label = \footnotesize {$1$}, labelstyle={xshift=-.08in, fill=none}](v10)(v4)
\Edge[label = \footnotesize {$2$}, labelstyle={xshift=.08in, fill=none}](v11)(v4)
\end{tikzpicture}
}}
\subfloat[]{\makebox[.48\textwidth]{
\begin{tikzpicture}[scale = 12]
\tikzstyle{VertexStyle} = []
\tikzstyle{EdgeStyle} = []
\tikzstyle{unlabeledStyle}=[shape = circle, minimum size = 6pt, inner sep = 1.2pt, draw]
\tikzstyle{labeledStyle}=[shape = circle, minimum size = 14pt, inner sep = 1.2pt, draw]
%\begin{scope}[xshift=.25in]
\Vertex[style = labeledStyle, x = 0.400, y = 0.800, L = \footnotesize {$v$}]{v0}
\Vertex[style = labeledStyle, x = 0.300, y = 0.650, L = \footnotesize {$z$}]{v1}
\Vertex[style = labeledStyle, x = 0.600, y = 0.800, L = \footnotesize {$w$}]{v2}
\Vertex[style = labeledStyle, x = 0.500, y = 0.650, L = \footnotesize {$x$}]{v3}
\Vertex[style = labeledStyle, x = 0.700, y = 0.650, L = \footnotesize {$y$}]{v4}
\Vertex[style = unlabeledStyle, x = 0.250, y = 0.570, L = \small {}]{v5}
\Vertex[style = unlabeledStyle, x = 0.350, y = 0.570, L = \small {}]{v6}
\Vertex[style = unlabeledStyle, x = 0.500, y = 0.570, L = \small {}]{v7}
\Vertex[style = unlabeledStyle, x = 0.350, y = 0.880, L = \small {}]{v8}
\Vertex[style = unlabeledStyle, x = 0.650, y = 0.880, L = \small {}]{v9}
\Vertex[style = unlabeledStyle, x = 0.650, y = 0.570, L = \small {}]{v10}
\Vertex[style = unlabeledStyle, x = 0.750, y = 0.570, L = \small {}]{v11}
\Edge[label = \footnotesize {$3$}, labelstyle={xshift=-.08in, fill=none}](v0)(v3)
\Edge[label = \footnotesize {$2$}, labelstyle={xshift=.08in, fill=none}](v2)(v3)
\Edge[label = \footnotesize {$0$}, labelstyle={yshift=.08in, fill=none}](v2)(v0)
\Edge[label = \footnotesize {}, labelstyle={auto=right, fill=none}](v1)(v0)
\Edge[label = \footnotesize {$3$}, labelstyle={xshift=.08in, fill=none}](v4)(v2)
\Edge[label = \footnotesize {$0$}, labelstyle={xshift=-.08in, fill=none}](v5)(v1)
\Edge[label = \footnotesize {$1$}, labelstyle={xshift=.08in, fill=none}](v6)(v1)
\Edge[label = \footnotesize {$1$}, labelstyle={xshift=.08in, fill=none}](v7)(v3)
\Edge[label = \footnotesize {$2$}, labelstyle={xshift=-.08in, fill=none}](v8)(v0)
\Edge[label = \footnotesize {$1$}, labelstyle={xshift=.08in, fill=none}](v9)(v2)
\Edge[label = \footnotesize {$0$}, labelstyle={xshift=-.08in, fill=none}](v10)(v4)
\Edge[label = \footnotesize {$2$}, labelstyle={xshift=.08in, fill=none}](v11)(v4)
%\end{scope}
\end{tikzpicture}
}}

\caption{Two colorings of $G-vz$ in Case 2.\label{lem1case2}
}
\end{figure}
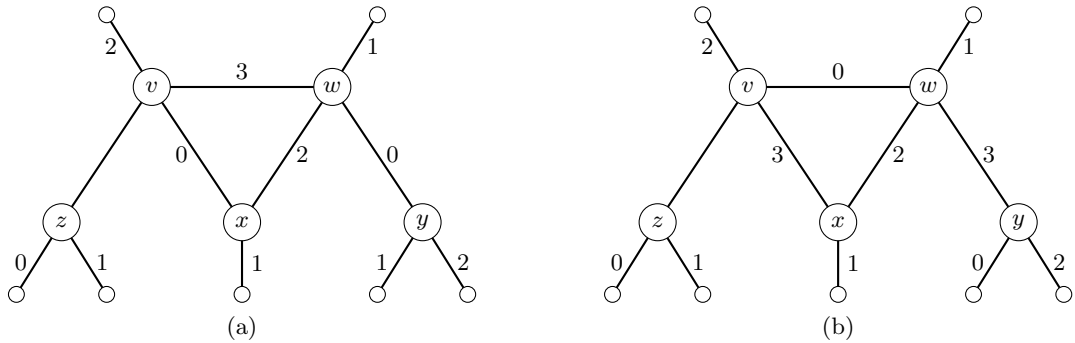

\textbf{Case 2: 0 is used on $\bs{vx}$.} 
The following observation is useful.  If no pendant edge uses 3 and edges $vv'$,
$ww'$, $xx'$ use distinct colors, then $\chi'(G)=4$.
By symmetry, assume that $\vph(vv')=2$, $\vph(ww')=1$, $\vph(xx')=0$, as in
Figure~\ref{lem1case1}(b).  To extend the coloring to $G$, let
$\vph(vz)=3$, $\vph(wy)=3$, $\vph(vw)=0$, $\vph(wx)=2$, $\vph(vx)=1$.

We now show that we may assume the edges are colored as in Figure~\ref{lem1case2}(a).
By symmetry, assume that
$\vph(vv')=2$ and $\vph(vw)=3$.  Note that $\vph(wx)\in\{1,2\}$.  
We assume that $\vph(wx)=2$.  Otherwise
$\vph(wx)=1$ and $\vph(xx')=3$, so a $(1,2)$-swap at $x$ gives $\vph(wx)=2$, as
desired.  Assume $\vph(xx')=3$; otherwise a $(1,3)$-swap at $x$ yields this.
Assume $\vph(ww')=0$ and $\vph(wy)=1$; otherwise a $(1,0)$-swap at $w$ yields
this.  Since $\vph(vw)=3$ and $\vph(wy)=1$, vertex $y$ must see 3.  Also, $y$
must see 2.  If not, then we do a $(1,3)$-swap at $x$, followed by a
$(1,2)$-swap at $y$, and the resulting
$(1,3)$-chain at $v$ ends at $x$.  Now do a $(0,1)$-swap at $y$, followed by
$(1,3)$-swaps at $x$ and $y$ to ensure that $x$ and $y$ each
see 1.  Thus, all edges are colored as in Figure~\ref{lem1case2}(a). 

If a (0,1)-chain in $G-E(H)$ 
starts at either $w$ or $x$ and ends at $\infty$, then we are done by
the observation at the start of Case 2.  So we assume that the $(0,1)$-chain at
$y$ ends at $\infty$, and we recolor it.  To maintain a coloring of $G-vz$, 
let $\vph(vx)=\vph(wy)=3$ and $\vph(vw)=0$, as in Figure~\ref{lem1case2}(b).

Consider the (1,2)-chains in $G-E(H)$ at $v$, $w$, $x$, $y$, $z$.  Let $P$ be
the chain at $w$.  If $P$ ends at $x$, then we recolor it.  To extend the
coloring to $G$, let $\vph(yw)=1$, $\vph(wx)=3$, $\vph(xv)=1$, and
$\vph(vz)=3$.  If $P$ instead ends at $v$, then we again
recolor it; now the extension is the same
as before, except that $\vph(vx)=2$.  So we must consider three possibilities:
the $(1,2)$-chain $P$ at $w$ ends at $y$, ends at $z$, or ends at $\infty$.
In each case, we recolor $P$ and show how to get a coloring of $G$.

Suppose $P$ ends at $y$.  Recolor it, and let $\vph(xw)=0$ and $\vph(wv)=1$, to
maintain a coloring of $G-vz$.  Now consider the $(0,2)$-chain $Q$ at $w$ in
$G-E(H)$.  If $Q$ ends at $y$ or $z$ (or $\infty$), then we are done by the
observation at the start of Case 2.  So assume that $Q$ ends at $v$.  Thus, we
assume the $(0,2)$-chain at $y$ ends at $z$; now we recolor it and let $\vph(vz)=0$.

Suppose $P$ ends at $z$.  Recolor it, and again let $\vph(xw)=0$ and
$\vph(wv)=1$. Now consider the $(0,1)$-chains in $G-E(H)$ that start at $x$,
$y$, and $z$; one of them must end at $\infty$.  If the chain at $z$ ends at
$\infty$, then recolor it and let $\vph(vz)=0$.  If the chain at $x$ ends at
$\infty$, then recolor it, and let $\vph(xw)=1$, $\vph(wv)=0$, and $\vph(vz)=1$.
Finally, if the chain at $y$ ends at $\infty$, then recolor it, and let
$\vph(yw)=0$, $\vph(wx)=3$, $\vph(xv)=0$, and $\vph(vz)=3$.

So assume $P$ ends at $\infty$.  As in the previous case, recolor $P$ and let
$\vph(xw)=0$ and $\vph(wv)=1$.  
Now consider the $(0,2)$-chains in $G-E(H)$ that start at
$v$, $w$, and $z$; one such chain must end at $\infty$, so call it $Q$.  If $Q$
starts at $v$ or $w$, then we recolor it and are done by the observation at the
start of Case 2.  Otherwise $Q$ starts at $z$, so we recolor $Q$ and let
$\vph(vz)=0$.  This completes Case~2.

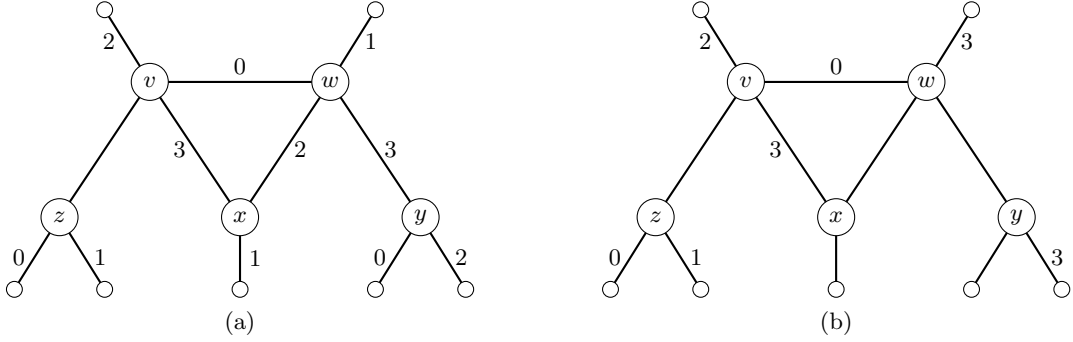
\begin{figure}[!t]
\centering
\subfloat[]{\makebox[.48\textwidth]{
\begin{tikzpicture}[scale = 12]
\tikzstyle{VertexStyle} = []
\tikzstyle{EdgeStyle} = []
\tikzstyle{unlabeledStyle}=[shape = circle, minimum size = 6pt, inner sep = 1.2pt, draw]
\tikzstyle{labeledStyle}=[shape = circle, minimum size = 14pt, inner sep = 1.2pt, draw]
\Vertex[style = labeledStyle, x = 0.400, y = 0.800, L = \footnotesize {$v$}]{v0}
\Vertex[style = labeledStyle, x = 0.300, y = 0.650, L = \footnotesize {$z$}]{v1}
\Vertex[style = labeledStyle, x = 0.600, y = 0.800, L = \footnotesize {$w$}]{v2}
\Vertex[style = labeledStyle, x = 0.500, y = 0.650, L = \footnotesize {$x$}]{v3}
\Vertex[style = labeledStyle, x = 0.700, y = 0.650, L = \footnotesize {$y$}]{v4}
\Vertex[style = unlabeledStyle, x = 0.250, y = 0.570, L = \small {}]{v5}
\Vertex[style = unlabeledStyle, x = 0.350, y = 0.570, L = \small {}]{v6}
\Vertex[style = unlabeledStyle, x = 0.500, y = 0.570, L = \small {}]{v7}
\Vertex[style = unlabeledStyle, x = 0.350, y = 0.880, L = \small {}]{v8}
\Vertex[style = unlabeledStyle, x = 0.650, y = 0.880, L = \small {}]{v9}
\Vertex[style = unlabeledStyle, x = 0.650, y = 0.570, L = \small {}]{v10}
\Vertex[style = unlabeledStyle, x = 0.750, y = 0.570, L = \small {}]{v11}
\Edge[label = \footnotesize {$3$}, labelstyle={xshift=-.08in, fill=none}](v0)(v3)
\Edge[label = \footnotesize {$2$}, labelstyle={xshift=.08in, fill=none}](v2)(v3)
\Edge[label = \footnotesize {$0$}, labelstyle={yshift=.08in, fill=none}](v2)(v0)
\Edge[label = \footnotesize {}, labelstyle={auto=right, fill=none}](v1)(v0)
\Edge[label = \footnotesize {$3$}, labelstyle={xshift=.08in, fill=none}](v4)(v2)
\Edge[label = \footnotesize {$0$}, labelstyle={xshift=-.08in, fill=none}](v5)(v1)
\Edge[label = \footnotesize {$1$}, labelstyle={xshift=.08in, fill=none}](v6)(v1)
\Edge[label = \footnotesize {$1$}, labelstyle={xshift=.08in, fill=none}](v7)(v3)
\Edge[label = \footnotesize {$2$}, labelstyle={xshift=-.08in, fill=none}](v8)(v0)
\Edge[label = \footnotesize {$1$}, labelstyle={xshift=.08in, fill=none}](v9)(v2)
\Edge[label = \footnotesize {$0$}, labelstyle={xshift=-.08in, fill=none}](v10)(v4)
\Edge[label = \footnotesize {$2$}, labelstyle={xshift=.08in, fill=none}](v11)(v4)
\end{tikzpicture}
}}
\subfloat[]{\makebox[.48\textwidth]{
\begin{tikzpicture}[scale = 12]
\tikzstyle{VertexStyle} = []
\tikzstyle{EdgeStyle} = []
\tikzstyle{unlabeledStyle}=[shape = circle, minimum size = 6pt, inner sep = 1.2pt, draw]
\tikzstyle{labeledStyle}=[shape = circle, minimum size = 14pt, inner sep = 1.2pt, draw]
%\begin{scope}[xshift=.25in]
\Vertex[style = labeledStyle, x = 0.400, y = 0.800, L = \footnotesize {$v$}]{v0}
\Vertex[style = labeledStyle, x = 0.300, y = 0.650, L = \footnotesize {$z$}]{v1}
\Vertex[style = labeledStyle, x = 0.600, y = 0.800, L = \footnotesize {$w$}]{v2}
\Vertex[style = labeledStyle, x = 0.500, y = 0.650, L = \footnotesize {$x$}]{v3}
\Vertex[style = labeledStyle, x = 0.700, y = 0.650, L = \footnotesize {$y$}]{v4}
\Vertex[style = unlabeledStyle, x = 0.250, y = 0.570, L = \small {}]{v5}
\Vertex[style = unlabeledStyle, x = 0.350, y = 0.570, L = \small {}]{v6}
\Vertex[style = unlabeledStyle, x = 0.500, y = 0.570, L = \small {}]{v7}
\Vertex[style = unlabeledStyle, x = 0.350, y = 0.880, L = \small {}]{v8}
\Vertex[style = unlabeledStyle, x = 0.650, y = 0.880, L = \small {}]{v9}
\Vertex[style = unlabeledStyle, x = 0.650, y = 0.570, L = \small {}]{v10}
\Vertex[style = unlabeledStyle, x = 0.750, y = 0.570, L = \small {}]{v11}
\Edge[label = \footnotesize {$3$}, labelstyle={xshift=-.08in, fill=none}](v0)(v3)
\Edge[label = \footnotesize {}, labelstyle={xshift=.08in, fill=none}](v2)(v3)
\Edge[label = \footnotesize {$0$}, labelstyle={yshift=.08in, fill=none}](v2)(v0)
\Edge[label = \footnotesize {}, labelstyle={xshift=.08in, fill=none}](v1)(v0)
\Edge[label = \footnotesize {}, labelstyle={xshift=.08in, fill=none}](v4)(v2)
\Edge[label = \footnotesize {$0$}, labelstyle={xshift=-.08in, fill=none}](v5)(v1)
\Edge[label = \footnotesize {$1$}, labelstyle={xshift=.08in, fill=none}](v6)(v1)
\Edge[label = \footnotesize {}, labelstyle={xshift=.08in, fill=none}](v7)(v3)
\Edge[label = \footnotesize {$2$}, labelstyle={xshift=-.08in, fill=none}](v8)(v0)
\Edge[label = \footnotesize {$3$}, labelstyle={xshift=.08in, fill=none}](v9)(v2)
\Edge[label = \footnotesize {}, labelstyle={xshift=.08in, fill=none}](v10)(v4)
\Edge[label = \footnotesize {$3$}, labelstyle={xshift=.08in, fill=none}](v11)(v4)
\end{tikzpicture}
}}

\caption{A coloring and a partial coloring of $G-vz$ in Case 3.\label{lem1case3}
}
\end{figure}

\textbf{Case 3: 0 is used on $\bs{vw}$.} 
By symmetry, assume that $\vph(vv')=2$ and $\vph(vx)=3$. 
Note that $x$ must see both 1 and 2; otherwise a (1,2)-swap at $x$ makes $v$ and
$z$ (1,3)-unlinked, which is a contradiction.  Hence, $x$ misses 0.
Suppose that $\vph(wy)=3$, as in Figure~\ref{lem1case3}(a).  Now $y$ must see 0, or
else a (0,3)-swap at $x$ reduces to Case 2.  We also can assume that $y$ sees 2
and misses 1, since $v$ and $z$ are (1,2)-linked.  Now $\vph(wx)\ne 1$, so
$\vph(wx)=2$, $\vph(ww')=1$, $\vph(xx')=1$.  But now we can (0,1)-swap at one of
$x$ and $y$ without effecting $vw$.  Afterwards, either $x$ misses
1 or $y$ misses 0; in both cases we are done.

So assume that $\vph(ww')=3$, as in Figure~\ref{lem1case3}(b).  Suppose that
$\vph(wx)=2$, so $\vph(wy)=\vph(xx')=1$.  
Now $y$ must see 3 or else a (1,3)-swap at $y$ takes us to
the previous paragraph.  Note that $v$ and $z$ must be (0,2)-linked, or else a
(0,2)-swap at $x$ reduces to Case 1.  Thus, we can assume that $y$ sees 2 and
misses 0.  Similarly, $v$ and $z$ must be (0,3)-linked, or we can reduce to Case 2.
However, now a (0,3)-swap at $y$ makes $y$ miss 3, a contradiction.  So assume
instead that $\vph(wx)=1$ and $\vph(wy)=2$.  Also $\vph(xx')=2$, or else a (1,2)-swap
at $x$ makes $v$ and $z$ (1,3)-unlinked, a contradiction.  
Suppose $y$ misses 0.  Now $y$ and $z$ must be (0,2)-linked, or else a
(0,2)-swap at $y$ reduces to Case 2.  But now a (0,2)-swap at $x$, followed by a
(1,2)-swap at $x$ makes $v$ and $z$ (1,3)-unlinked, a contradiction.  Thus, $y$
sees 0.
%Now $y$ must see 0; otherwise we use a (0,2)-swap at $x$, or (0,2)-swap at $y$
%to reduce to Case 1.  
Also, $y$ sees exactly one of 1 and 3, and we can assume
it is 3.  But now a (1,2)-swap at $y$ reduces to the case above, where
$\vph(wx)=2$.  
This completes Case 3.
\end{proof}

\begin{lem}
%If $\vph(uu')=0$ (where 0 is seen by both $u$ and $y$) then $\chi'(G)=4$.
Suppose that $\Delta=4$ and $G$ has the configuration in
Figure~\ref{fig:3reducibles}(b) (reproduced in Figure~\ref{lem2case1}, if we
ignore the colors there). %without the colors shown.
If $\chi'(G-ux)=4$, then $\chi'(G)=4$.
\label{lem2}
\end{lem}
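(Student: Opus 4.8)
The plan is to follow the strategy of Lemma~\ref{lem1}. Assume $\chi'(G-ux)=4$ and suppose, for contradiction, that $\chi'(G)>4$; as in the framing of this section, Lemma~\ref{lem0} lets us treat $G$ as critical with all degrees prescribed by Figure~\ref{fig:3reducibles}(b). Write $u$ for the $4$-vertex and $x$ for the $3$-vertex of the deleted edge $e=ux$, let $z$ be the other common $3$-neighbor of $u$ and its partner $w$ (the second $4$-vertex of the $K_{2,2}$), and let $v\adj y$ be the tail, with $v$ a $4$-vertex adjacent to $u$ and $y$ a $3$-vertex. Start from a coloring $\vph$ of $G-ux$. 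Then $u$ has degree $3$, so it misses exactly one color, and $x$ has degree $2$, so it misses exactly two. If $u$ and $x$ miss a common color we color $ux$ and finish, so I would first record that the color missed by $u$, say $0$, is seen at $x$; by symmetry $u$ sees $1,2,3$, while $x$ sees $0$ and exactly one further color.

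Next I would extract the fundamental linkage conditions, exactly as the sentence ``$v$ and $z$ must be $(1,2)$- and $(1,3)$-linked'' does in Lemma~\ref{lem1}: for each color $c$ missed at $x$, the vertices $u$ and $x$ must be $(0,c)$-linked, since otherwise a $(0,c)$-swap at $x$ makes $x$ miss $0$ and frees $ux$. The feature new to Figure~\ref{fig:3reducibles}(b), absent from part (a), is that $x$ and $z$ share the $4$-neighbor $w$ in addition to $u$, so they lie on the $4$-cycle $u\adj z\adj w\adj x\adj u$. I would use this cycle to turn the linkage conditions at $\{u,x\}$ into rigid statements about the colors on the cycle edges $uz$, $zw$, $wx$ and on the pendant edges at $z$ and $w$: any swap that would break a required $(0,c)$-link must be blocked, which forces those colors.

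With the coloring pinned down around the cycle, I would run a case analysis organized (as in Lemma~\ref{lem1}) by the location of the distinguished color among the three edges $uz$, $uv$, $uu'$ at $u$, where $u'$ is the pendant neighbor of $u$. The tail $u\adj v\adj y$ is what makes the configuration reducible: since $v$ is a $4$-vertex and $y$ a $3$-vertex, they supply colors missed at $y$ and chains through $v$ that can be rerouted, and I would lean repeatedly on the device from the section preamble that, for any color pair, at most one chain in $G-E(H)$ starting in $V(H)$ ends at $\infty$. In each case I would either exhibit an explicit extension (assign a color to $ux$ and recolor the affected cycle and tail edges) or perform a swap that reduces to an earlier case, mirroring how Case~3 of Lemma~\ref{lem1} folds back into Cases~1 and~2.

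I expect the main obstacle to be the bookkeeping imposed by the $4$-cycle. Because both $x$ and $z$ are common neighbors of $u$ and $w$, a single $(i,j)$-swap can simultaneously disturb the $(0,c)$-link of $\{u,x\}$, the analogous relation for $\{u,z\}$, and the status of $w$ on the cycle, so each swap must be chosen to repair one constraint without violating another. The most delicate subcase will be the one in which all of the chains we would like to recolor terminate inside $V(H)$ (at $w$, $z$, or $y$) rather than at $\infty$; there I would argue, using that at most one such chain reaches $\infty$ together with the path/even-cycle structure of chains, that some admissible swap still exists, and then close the coloring by rerouting through the tail vertex $v$.
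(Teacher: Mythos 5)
Your framing is the right one, and it coincides with the paper's: reduce to a critical graph via Lemma~\ref{lem0}, normalize the colors at the two endpoints of the deleted edge, derive the linkage conditions that edge forces (the endpoints must be $(0,c)$-linked for each color $c$ missed at the trivalent endpoint), and then use Kempe swaps together with the ``at most one chain in $G-E(H)$ ends at $\infty$'' device to whittle the coloring of the configuration down to a few explicit patterns. But that framing is where your proposal stops, and it is exactly where the work of Lemma~\ref{lem2} begins. Everything after your second paragraph is written in the conditional (``I would use this cycle to turn the linkage conditions into rigid statements\dots'', ``I would argue\dots that some admissible swap still exists''), and those are precisely the claims that must be proved rather than asserted. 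There is no general principle guaranteeing that the swaps close up. In the paper, Case~1 alone requires showing that the second trivalent vertex of the $K_{2,2}$ never sees $0$; then forcing the colors at the tetravalent endpoint of the deleted edge by an eight-step chain of state transitions $(3;0)\to(3;2)\to\cdots\to(2;0)$ tracking the color of the tail edge and the color missed at the tail's end; then a further normalization to reach Figure~\ref{lem2case1}; and finally a three-stage analysis of the $(0,1)$-, $(1,2)$-, and $(1,3)$-chains leaving $V(H)$, with a bespoke explicit recoloring of $H$ in each terminal subcase. Case~2 repeats this with a different transition diagram. None of that content appears in your proposal, and the lemma is nothing but that content.

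Two concrete slips are also worth fixing before you attempt the details. First, you fix $0$ to be the color \emph{missed} at your $4$-vertex $u$ and then propose to split into cases by which of the three edges $uz$, $uv$, $uu'$ carries ``the distinguished color''; since $u$ misses $0$, no edge at $u$ carries it, so as stated the case split is empty. The paper instead chooses $0$ to be the color seen by \emph{both} endpoints and distinguishes cases by which of the two edges at the degree-two endpoint carries it. Second, your ``most delicate subcase,'' in which all the chains you want to recolor end inside $V(H)$, cannot be dispatched abstractly: parity only guarantees that, among the chains of a fixed color pair starting at an odd number of relevant vertices of $H$, at least one ends at $\infty$, and the proof must then check, pairing by pairing, that every admissible configuration of chain endpoints leads either to an extension of the coloring to $ux$ or to a contradiction with a required linkage. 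That enumeration is the proof; until it is carried out, the lemma is not established.
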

\begin{proof}
We start with a coloring of $G-ux$ and assume that $G$ has no
coloring, which leads to a contradiction.  We denote by $u'$, $w'$, and
$x'$ the sole unlabeled neighbors of $u$, $w$, and $x$, respectively.
By symmetry, we assume that $u$ sees 0 and 1, and $x$ sees 0, 2, and 3. 
We repeatedly use that $u$ and $x$ must be (1,2)- and (1,3)-linked.  
We consider two cases: color 0 is used on $uu'$ or used on $uv$.

\textbf{Case 1: 0 is used on $\bs{uu'}$.}
We first show that we may assume the edges are colored as in Figure~\ref{lem2case1}.  
By assumption $\vph(uu')=0$ and $\vph(uv)=1$. 
We show that $w$ must miss 0. If $\vph(vw)=0$, then $\vph(wx)=2$
(by symmetry) and $\vph(ww')=1$, but after a (1,3)-swap at $w$, vertices $u$ and
$x$ are (1,2)-unlinked.  A similar argument works if $\vph(wx)=0$.  If
$\vph(ww')=0$, then $\vph(wx)=2$ (by symmetry), but now $u$ and $x$ are
(1,2)-unlinked, a contradiction.  So $w$ misses 0, as claimed.  So,
by symmetry, we have $\vph(vw)=2$, $\vph(wx)=3$, $\vph(ww')=1$, as in
Figure~\ref{lem2case1}.  

Now we show that $\vph(xx')=0$ and $\vph(xy)=2$.  Suppose to the contrary that
$\vph(xx')=2$ and $\vph(xy)=0$.  
Note that $u$ and $w$ must be (0,2)-linked; otherwise a (0,2)-swap at $w$ gives
$\vph(vw)=0$, a contradiction. 
If we can get $\vph(yz)=2$ and $z$ missing 0,
then a (0,2)-swap at $z$ gives $\vph(xx')=0$ and $\vph(xy)=2$. 
Always $u$ and $x$ must be (1,2)- and (1,3)-linked.  They must also be
(0,3)-linked, since otherwise we get a coloring where $w$ sees 0, a
contradiction.  Now we use a series of (0,2)-, (0,3)-, (1,2)-, and (1,3)-swaps
at $z$ to get $\vph(yz)=2$ and $z$ missing 0.  (If a (0,2)-swap ever recolors
$xx'$ and $xy$, then we accomplish our goal and are done, so we assume this
never happens.) We write $(i;j)$ to denote that
$\vph(yz)=i$ and $z$ misses $j$.  Also $(i;j)\to (i';j')$ if one of the four
swaps mentioned yields $(i';j')$ from $(i;j)$.  We have $(3;0)\to (3;2)\to
(3;1)\to (1;3)\to (1;0)\to (1;2)\to (2;1)\to (2;3)\to (2;0)$. 
So after a (2,0)-swap at $z$, we have $\vph(xx')=0$ and $\vph(xy)=2$, as desired.

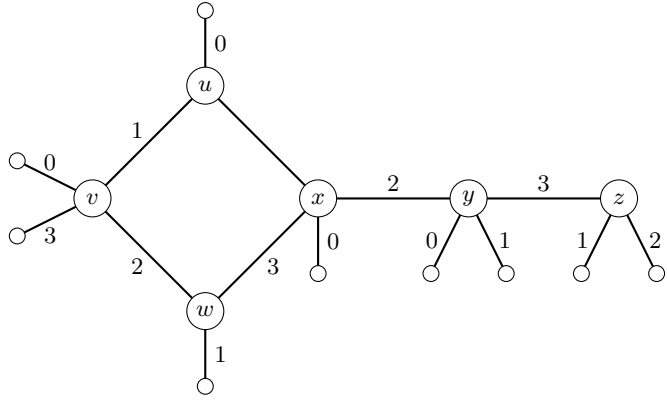
\begin{figure}[!t]
\centering
\begin{tikzpicture}[scale = 10]
\tikzstyle{VertexStyle} = []
\tikzstyle{EdgeStyle} = []
\tikzstyle{labeledStyle}=[shape = circle, minimum size = 14pt, inner sep = 1.2pt, draw]
\tikzstyle{unlabeledStyle}=[shape = circle, minimum size = 6pt, inner sep = 1.2pt, draw]
\Vertex[style = labeledStyle, x = 0.300, y = 0.650, L = \footnotesize {$v$}]{v0}
\Vertex[style = labeledStyle, x = 0.600, y = 0.650, L = \footnotesize {$x$}]{v1}
\Vertex[style = labeledStyle, x = 0.450, y = 0.500, L = \footnotesize {$w$}]{v2}
\Vertex[style = labeledStyle, x = 0.800, y = 0.650, L = \footnotesize {$y$}]{v3}
\Vertex[style = labeledStyle, x = 0.450, y = 0.800, L = \footnotesize {$u$}]{v4}
\Vertex[style = labeledStyle, x = 1.000, y = 0.650, L = \footnotesize {$z$}]{v5}
\Vertex[style = unlabeledStyle, x = 0.450, y = 0.400, L = \small {}]{v6}
\Vertex[style = unlabeledStyle, x = 0.600, y = 0.550, L = \small {}]{v7}
\Vertex[style = unlabeledStyle, x = 1.050, y = 0.550, L = \small {}]{v8}
\Vertex[style = unlabeledStyle, x = 0.950, y = 0.550, L = \small {}]{v9}
\Vertex[style = unlabeledStyle, x = 0.750, y = 0.550, L = \small {}]{v10}
\Vertex[style = unlabeledStyle, x = 0.850, y = 0.550, L = \small {}]{v11}
\Vertex[style = unlabeledStyle, x = 0.200, y = 0.700, L = \small {}]{v12}
\Vertex[style = unlabeledStyle, x = 0.200, y = 0.600, L = \small {}]{v13}
\Vertex[style = unlabeledStyle, x = 0.450, y = 0.900, L = \small {}]{v14}
\Edge[label = \footnotesize {$2$}, labelstyle={xshift=-.06in, yshift=-.06in, fill=none}](v0)(v2)
\Edge[label = \footnotesize {$2$}, labelstyle={yshift=.08in, fill=none}](v3)(v1)
\Edge[label = \footnotesize {$1$}, labelstyle={xshift=-.06in, yshift=.06in, fill=none}](v4)(v0)
\Edge[label = \footnotesize {}, labelstyle={auto=right, fill=none}](v4)(v1)
\Edge[label = \footnotesize {$3$}, labelstyle={yshift=.08in, fill=none}](v5)(v3)
\Edge[label = \footnotesize {$1$}, labelstyle={xshift=.08in, fill=none}](v6)(v2)
\Edge[label = \footnotesize {$0$}, labelstyle={xshift=.08in, fill=none}](v7)(v1)
\Edge[label = \footnotesize {$2$}, labelstyle={xshift=.08in, fill=none}](v8)(v5)
\Edge[label = \footnotesize {$1$}, labelstyle={xshift=.08in, fill=none}](v11)(v3)
\Edge[label = \footnotesize {$3$}, labelstyle={yshift=-.08in, fill=none}](v13)(v0)
\Edge[label = \footnotesize {$0$}, labelstyle={xshift=.08in, fill=none}](v14)(v4)
\Edge[label = \footnotesize {$3$}, labelstyle={xshift=.06in, yshift=-.06in, fill=none}](v2)(v1)
\Edge[label = \footnotesize {$0$}, labelstyle={xshift=-.08in, fill=none}](v3)(v10)
\Edge[label = \footnotesize {$1$}, labelstyle={xshift=-.08in, fill=none}](v5)(v9)
\Edge[label = \footnotesize {$0$}, labelstyle={yshift=.08in, fill=none}](v0)(v12)
\end{tikzpicture}
\caption{A coloring of $G-ux$ in Case 1 of the proof of
Lemma~\ref{lem2}.\label{lem2case1}}
\end{figure}

Finally, we will show that we may assume $\vph(yz)=3$ and $z$ misses 0.
In the notation above, we want to reach the case (3;0).  We can still use
(0,3)-, (1,2)-, and (1,3)-swaps at $z$ (but, in general, cannot use (0,2)-swaps).
We have $(0;2)\to (0;1)\to (0;3)\to (3;0)$.  We also have $(1;0)\to (1;3)\to
(3;1)\to (3;2)$.  Further, in (3;2), we can use a (0,2)-swap to reach (3;0).
For, suppose it interchanges the colors 0 and 2 on $xx'$ and $xy$.  Now the
(0,3)-chain at $z$ ends at $w$.  So a (0,3)-swap at $z$ makes $w$ see 0, a
contradiction.  Finally, consider (1;2).  Now the (1,2)-chain at $z$ ends at
$x$.  After a (1,2)-swap at $z$, we let $\vph(ux)=2$, to get a coloring of
$G$.  So, we may assume the edges are colored as in Figure~\ref{lem2case1}.

Let $H$ be the 6-edge-subgraph induced by $\{u,v,w,x,y,z\}$ of the configuration
in Figure~\ref{lem2case1}.  Consider the (0,1)-chains in $G-E(H)$ at $u$, $v$,
$w$, $x$, $z$.  By parity, one chain must end at $\infty$.
Recall that $w$ and $x$ must be (0,1)-linked in $G$, since $w$ never
sees 0.  So if the (0,1)-chain at $w$ or $x$ ends at $\infty$ or $z$, then we reach a
contradiction.  %The same is true if the (0,1)-chain at $w$ or $x$ ends at $z$.
If the (0,1)-chain at $v$ ends at $\infty$ or $z$, then recolor it.  Now let
$\vph(vw)=0$, $\vph(uv)=2$, and $\vph(ux)=1$.  
%This also works if the (0,1)-chain at $v$ ends at $z$.  
So assume that the (0,1)-chain at $z$ ends at $\infty$, and recolor it. 
%(and (0,1)-chains run between two pairs in $\{u,v,w,x\}$).

Consider the (1,2)-chains in $G-E(H)$ at $w$, $y$, and $z$.
If the (1,2)-chain at $w$ ends at $\infty$ or $z$, then recolor it, and let
$\vph(wv)=1$, $\vph(vu)=2$, and $\vph(ux)=1$.  
%The same thing works if the (1,2)-chain at $w$ ends at $z$.  
So assume the (1,2)-chain at $z$ ends at
$\infty$, and recolor it.  Finally, consider the (1,3)-chains in $G-E(H)$ that
start at $v$, $w$, $y$, and $z$.  If the chain at $z$ ends at $y$, then recolor
it, and let $\vph(zy)=2$, $\vph(yx)=1$, and $\vph(xu)=2$.  If the chain at $z$
ends at $w$, then recolor it and let $\vph(zy)=2$, $\vph(yx)=3$, $\vph(xw)=1$,
and $\vph(xu)=2$.  If the chain at $y$ ends at $w$, then recolor it and
%recolor the other edges of $H$ as follows.  
let $\vph(zy)=2$, $\vph(yx)=1$, $\vph(xw)=2$, $\vph(wv)=1$,
$\vph(vu)=2$, and $\vph(ux)=3$.  This finishes Case 1.

\textbf{Case 2: 0 is used on $\bs{uv}$.}
We show that we may assume all edges are colored as in Figure~\ref{lem2case2}. 
After that, a (0,2)-swap at $w$ gives $\vph(wx)=0$, which we handle now.
Suppose first that $\vph(wx)=0$.  By possibly using a (1,2)- or (1,3)-swap
at  $w$, we assume that $w$ misses 1.  Now we let $\vph(wx)=1$, which reduces
to Case 1.  So we assume, by symmetry, that $\vph(wx)=2$.  If $w$ sees 0, then
(possibly after a (1,3)-swap at $w$), vertex $w$ misses 1, so $u$ and $x$ are
(1,2)-unlinked, a contradiction.  Thus, $w$ misses 0.  Suppose that $\vph(vw)=1$ and
$\vph(ww')=3$.  Now we uncolor $wx$ and let $\vph(ux)=2$.  This reduces to Case
1, with $w$ in place of $u$ (and 3 in place of 0).  So we assume that
$\vph(vw)=3$ and $\vph(ww')=1$, as in Figure~\ref{lem2case2}. Assume that
$\vph(xx')=3$ and $\vph(xy)=0$; otherwise, this follows from a (0,3)-swap at $x$.

As in Case 1, we write $(i;j)$ to denote that $\vph(yz)=i$ and $z$ misses $j$.
Recall that (1,2)- and (1,3)-swaps at $z$ do not change the colors on edges
incident to $u$, $v$, $w$, and $x$.  Neither do (0,2)-swaps, when $\vph(yz)\ne
2$. (If $w$ and $u$ are (0,2)-unlinked, then we can get $\vph(wx)=0$, which
reduces to Case 1, as in the previous paragraph.)  In fact, we can also use (0,1)-swaps,
as follows.  Vertices $u$ and $x$ must be (0,1)-linked, or we reduce to Case 1.
And $w$ and $x$ must be (0,1)-linked, or else we get $w$ missing 1, which is a
contradiction.

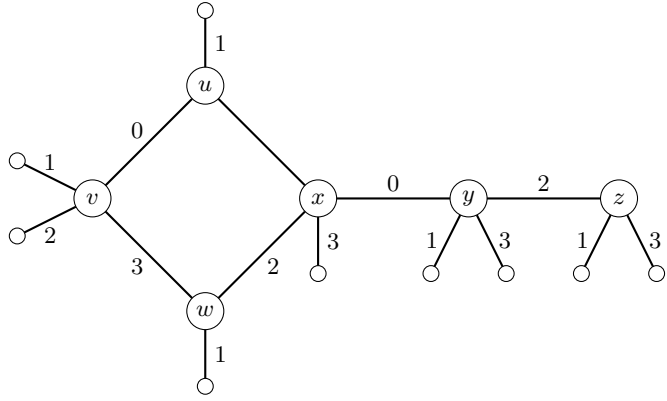
\begin{figure}[!t]
\centering
\begin{tikzpicture}[scale = 10]
\tikzstyle{VertexStyle} = []
\tikzstyle{EdgeStyle} = []
\tikzstyle{labeledStyle}=[shape = circle, minimum size = 14pt, inner sep = 1.2pt, draw]
\tikzstyle{unlabeledStyle}=[shape = circle, minimum size = 6pt, inner sep = 1.2pt, draw]
\Vertex[style = labeledStyle, x = 0.300, y = 0.650, L = \footnotesize {$v$}]{v0}
\Vertex[style = labeledStyle, x = 0.600, y = 0.650, L = \footnotesize {$x$}]{v1}
\Vertex[style = labeledStyle, x = 0.450, y = 0.500, L = \footnotesize {$w$}]{v2}
\Vertex[style = labeledStyle, x = 0.800, y = 0.650, L = \footnotesize {$y$}]{v3}
\Vertex[style = labeledStyle, x = 0.450, y = 0.800, L = \footnotesize {$u$}]{v4}
\Vertex[style = labeledStyle, x = 1.000, y = 0.650, L = \footnotesize {$z$}]{v5}
\Vertex[style = unlabeledStyle, x = 0.450, y = 0.400, L = \small {}]{v6}
\Vertex[style = unlabeledStyle, x = 0.600, y = 0.550, L = \small {}]{v7}
\Vertex[style = unlabeledStyle, x = 1.050, y = 0.550, L = \small {}]{v8}
\Vertex[style = unlabeledStyle, x = 0.950, y = 0.550, L = \small {}]{v9}
\Vertex[style = unlabeledStyle, x = 0.750, y = 0.550, L = \small {}]{v10}
\Vertex[style = unlabeledStyle, x = 0.850, y = 0.550, L = \small {}]{v11}
\Vertex[style = unlabeledStyle, x = 0.200, y = 0.700, L = \small {}]{v12}
\Vertex[style = unlabeledStyle, x = 0.200, y = 0.600, L = \small {}]{v13}
\Vertex[style = unlabeledStyle, x = 0.450, y = 0.900, L = \small {}]{v14}
\Edge[label = \footnotesize {$3$}, labelstyle={xshift=-.06in, yshift=-.06in, fill=none}](v0)(v2)
\Edge[label = \footnotesize {$0$}, labelstyle={yshift=.08in, fill=none}](v3)(v1)
\Edge[label = \footnotesize {$0$}, labelstyle={xshift=-.06in, yshift=.06in, fill=none}](v4)(v0)
\Edge[label = \footnotesize {}, labelstyle={auto=right, fill=none}](v4)(v1)
\Edge[label = \footnotesize {$2$}, labelstyle={yshift=.08in, fill=none}](v5)(v3)
\Edge[label = \footnotesize {$1$}, labelstyle={xshift=.08in, fill=none}](v6)(v2)
\Edge[label = \footnotesize {$3$}, labelstyle={xshift=.08in, fill=none}](v7)(v1)
\Edge[label = \footnotesize {$3$}, labelstyle={xshift=.08in, fill=none}](v8)(v5)
\Edge[label = \footnotesize {$3$}, labelstyle={xshift=.08in, fill=none}](v11)(v3)
\Edge[label = \footnotesize {$2$}, labelstyle={yshift=-.08in, fill=none}](v13)(v0)
\Edge[label = \footnotesize {$1$}, labelstyle={xshift=.08in, fill=none}](v14)(v4)
\Edge[label = \footnotesize {$2$}, labelstyle={xshift=.06in, yshift=-.06in, fill=none}](v2)(v1)
\Edge[label = \footnotesize {$1$}, labelstyle={xshift=-.08in, fill=none}](v3)(v10)
\Edge[label = \footnotesize {$1$}, labelstyle={xshift=-.08in, fill=none}](v5)(v9)
\Edge[label = \footnotesize {$1$}, labelstyle={yshift=.08in, fill=none}](v0)(v12)

\end{tikzpicture}
\caption{A coloring of $G-ux$ in Case 2 of the proof of
Lemma~\ref{lem2}.\label{lem2case2}}
\end{figure}

We write $(i;j)\to (i';j')$ if, starting from $(i;j)$, we get $(i';j')$ by using
a (0,1)-, (0,2)-, (1,2)-, or (1,3)-swap at $z$.  Our goal is to reach (2;0), as
in Figure~\ref{lem2case2}.  When we do, the (0,2)-chain at $z$ ends at $w$.  Now a
(0,2)-swap at $z$ gives $\vph(wx)=0$, which we handled in the first paragraph.
Note that $(1;0)\to (1;2)\to (2;1)\to (2;0)$.  Also, $(2;3)\to (2;1)\to (2;0)$.
In any of these five cases, we are done.
Note also that $(3;2)\to (3;0)\to (3;1)\to (1;3)$, so we can assume (1;3).
Now we use (0,3)-swaps at $x$ and $z$.  
%If it doesn't effect $z$, then we also use a (0,3)-swap at $z$.  
So we have (1;0) and $\vph(xx')=0$ and $\vph(xy)=3$.  We use a (0,2)-swap at
$z$, followed by a (0,3)-swap at $x$, followed by a (1,2)-swap at $z$, followed
by a (0,1)-swap at $z$.
Now all edges are colored as in Figure~\ref{lem2case2}, so we are done.
\end{proof}

\begin{lem}
Suppose that $G\in \HH_4$ and $G$ has the configuration in
Figure~\ref{fig:3reducibles}(c) (reproduced in Figure~\ref{lem3case1}, if we
ignore the colors there). %without the colors shown.
%Figure~\ref{fig5}, 
If $\chi'(G-st)=4$, then $\chi'(G)=4$.
\label{lem3}
\end{lem}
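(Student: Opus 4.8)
The plan is to follow exactly the template established in Lemmas~\ref{lem1} and~\ref{lem2}: assume for contradiction that $G$ has no $4$-coloring, so by Lemma~\ref{lem0} we may take $G$ to be critical and in $\HH_4$, and start from a coloring $\vph$ of $G-st$. First I would fix the local structure of Figure~\ref{fig:3reducibles}(c): the two $4$-vertices (call them $x$ and $y$, the endpoints of $e=st$ playing the roles analogous to $u,x$ in Lemma~\ref{lem2}) are the centers, each carrying two $3$-neighbors, and they share at most one common $3$-neighbor. Because the configuration in (c) is larger and more spread out than those in (a),(b), I expect to need more vertices and more pendant edges in the working picture, so the bookkeeping will be heavier; the reducible edge $st$ is the one joining the two $4$-vertices to their shared apex. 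By the symmetry of the four colors I would normalize the missing-color pattern at the two endpoints of $st$, say that $s$ sees $0,1$ and $t$ sees $0,2,3$, and record the two linkage facts that must persist throughout (the analogues of ``$u$ and $x$ must be $(1,2)$- and $(1,3)$-linked'').

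Next I would carry out the case analysis on where color $0$ is used at the endpoint $s$ of the uncolored edge, exactly as Cases~1 and~2 in Lemma~\ref{lem2} split on ``$0$ on $uu'$'' versus ``$0$ on $uv$''. In each case the goal is the same two-phase argument: (i) use a bounded sequence of Kempe swaps---$(i,j)$-swaps at the low-degree boundary vertices and at the far vertex $z$---to drive $\vph$ into a single canonical coloring of $H-e$ displayed in a figure, and (ii) from that canonical coloring, recolor one $(i,j)$-chain in $G-E(H)$ that ends at $\infty$ (guaranteed to exist by the parity argument already invoked in the earlier proofs: among the chains starting at the boundary vertices of $H$, one must end outside $H$) and then patch the few edges of $H$ by hand to complete the coloring of $G$. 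I would reuse the $(i;j)$ state-transition notation introduced in Lemma~\ref{lem2}, where $(i;j)$ records that $\vph(yz)=i$ and $z$ misses $j$, to compress the normalization phase into a short chain of transitions ending at the target state.

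The step I expect to be the main obstacle is the normalization phase (i): showing that the swaps really do terminate at the claimed canonical coloring without accidentally breaking one of the linkage conditions, and verifying that every potentially obstructing chain configuration (a chain that ``ends at the wrong vertex'' such as $x$, $y$, or $z$ rather than at $\infty$) can be turned to our advantage. As in the previous lemmas, the key recurring trick is that two chains cannot both terminate at the same vertex, so at least one of a competing pair is free to recolor; I would lean on this repeatedly to dispatch the subcases. Because (c) has two degree-$4$ centers rather than the single central $4$-vertex of (a) and (b), I anticipate an extra sub-branch according to whether the shared $3$-neighbor exists and which colors it forces, and this is where the most delicate swap sequences will live. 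Once the canonical coloring is reached, the completion phase (ii) should be routine: identify the chain ending at $\infty$, recolor it, and reassign the handful of edges $\vph(st),\ldots$ incident to $x$ and $y$ to finish, mirroring the explicit edge reassignments ``$\vph(vz)=3$, $\vph(wy)=3,\ldots$'' used to close the cases of Lemmas~\ref{lem1} and~\ref{lem2}.
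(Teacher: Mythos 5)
There is a genuine gap: what you have written is a strategy memo, not a proof. You correctly identify the general template (delete the edge, normalize the coloring of $H-e$ by Kempe swaps, exploit that among the $(i,j)$-chains leaving $V(H)$ at most one can end at any given vertex, then patch the edges of $H$ by hand), and this is indeed the paper's method. But for this lemma the entire mathematical content lies in the part you defer: the explicit swap sequences that drive an arbitrary coloring of $G-st$ into one of a few canonical colorings, and the verification in each terminal case that the coloring extends to $G$. You repeatedly flag these as ``the main obstacle'' and ``where the most delicate swap sequences will live'' without supplying any of them, so nothing is actually proved. You also misread the configuration: in Figure~\ref{fig:3reducibles}(c) the deleted edge $st$ joins the apex $4$-vertex $s$ to one of its $3$-neighbors $t$; it does not join ``the two $4$-vertices to their shared apex,'' and consequently your normalization (``$s$ sees $0,1$ and $t$ sees $0,2,3$'') has the degree-$2$ and degree-$3$ endpoints of the uncolored edge reversed relative to the actual structure.

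A further missing ingredient: Lemma~\ref{lem3} is the only one of the three whose hypothesis is $G\in\HH_4$ rather than merely $\Delta=4$, and the paper uses this in an essential, non-swap way. In Case 2 it invokes $G\in\HH_4$ to guarantee that the $4$-vertex $w$ has a second $3$-neighbor $\hat z$, and it disposes of the subcase $u\in N(w)$ or $t\in N(w)$ by observing that this creates a copy of configuration (a), so $\chi'(G)=4$ by Lemma~\ref{lem1}. This interplay between the reducibility lemmas, and the case split on whether $\hat z\in\{x,y\}$ (leading to the two quite different endgames of Cases 2a and 2b), is not anticipated by your outline; your proposed ``extra sub-branch according to whether the shared $3$-neighbor exists'' does not correspond to the actual branching the argument requires. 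To turn this into a proof you would need to carry out the full case analysis explicitly.
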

\begin{proof}
We start with a coloring of $G-st$ and assume that $G$ has no
coloring, which leads to a contradiction.  
We denote by $v'$ the unlabeled neighbor of $v$.
By symmetry, we assume that
$t$ sees 0 and 1, and $s$ sees 0, 2, and 3.  
We consider two cases: either 0 is used on $su$ or it is not.

%We consider three cases: color 0 is used on $su$, on $sv$, or on $sw$.
%We reduce the first to the second, and the second to the third.  Finally, we
%explicitly show how to handle the third.

\begin{figure}[!t]
\centering
\begin{tikzpicture}[scale = 10]
\tikzstyle{VertexStyle} = []
\tikzstyle{EdgeStyle} = []
\tikzstyle{labeledStyle}=[shape = circle, minimum size = 14pt, inner sep = 1.2pt, draw]
\tikzstyle{unlabeledStyle}=[shape = circle, minimum size = 6pt, inner sep = 1.2pt, draw]
\Vertex[style = labeledStyle, x = 0.600, y = 0.850, L = \footnotesize {$s$}]{v0}
\Vertex[style = labeledStyle, x = 0.300, y = 0.650, L = \footnotesize {$t$}]{v1}
\Vertex[style = labeledStyle, x = 0.500, y = 0.650, L = \footnotesize {$u$}]{v2}
\Vertex[style = labeledStyle, x = 0.700, y = 0.650, L = \footnotesize {$v$}]{v3}
\Vertex[style = labeledStyle, x = 0.900, y = 0.650, L = \footnotesize {$w$}]{v4}
\Vertex[style = labeledStyle, x = 0.600, y = 0.450, L = \footnotesize {$x$}]{v5}
\Vertex[style = labeledStyle, x = 0.800, y = 0.450, L = \footnotesize {$y$}]{v6}
\Vertex[style = labeledStyle, x = 1.000, y = 0.450, L = \footnotesize {$z$}]{v7}
\Vertex[style = unlabeledStyle, x = 0.250, y = 0.550, L = \tiny {}]{v8}
\Vertex[style = unlabeledStyle, x = 0.350, y = 0.550, L = \tiny {}]{v9}
\Vertex[style = unlabeledStyle, x = 0.450, y = 0.550, L = \tiny {}]{v10}
\Vertex[style = unlabeledStyle, x = 0.550, y = 0.550, L = \tiny {}]{v11}
\Vertex[style = unlabeledStyle, x = 0.600, y = 0.650, L = \tiny {}]{v12}
\Vertex[style = unlabeledStyle, x = 1.050, y = 0.700, L = \tiny {}]{v13}
\Vertex[style = unlabeledStyle, x = 1.050, y = 0.650, L = \tiny {}]{v14}
\Vertex[style = unlabeledStyle, x = 0.550, y = 0.350, L = \tiny {}]{v15}
\Vertex[style = unlabeledStyle, x = 0.650, y = 0.350, L = \tiny {}]{v16}
\Vertex[style = unlabeledStyle, x = 0.750, y = 0.350, L = \tiny {}]{v17}
\Vertex[style = unlabeledStyle, x = 0.850, y = 0.350, L = \tiny {}]{v18}
\Vertex[style = unlabeledStyle, x = 0.950, y = 0.350, L = \tiny {}]{v19}
\Vertex[style = unlabeledStyle, x = 1.050, y = 0.350, L = \tiny {}]{v20}
\Edge[label = \footnotesize {$2$}, labelstyle={xshift=.08in, fill=none}](v3)(v0)
\Edge[label = \footnotesize {$3$}, labelstyle={xshift=.15in, fill=none}](v4)(v0)
\Edge[label = \footnotesize {$0$}, labelstyle={xshift=-.08in, fill=none}](v1)(v8)
\Edge[label = \footnotesize {$1$}, labelstyle={xshift=.08in, fill=none}](v9)(v1)
\Edge[label = \footnotesize {$3$}, labelstyle={xshift=.08in, fill=none}](v11)(v2)
\Edge[label = \footnotesize {}, labelstyle={xshift=.08in, fill=none}](v20)(v7)
\Edge[label = \footnotesize {$1$}, labelstyle={xshift=-.08in, fill=none}](v2)(v10)
\Edge[label = \footnotesize {$0$}, labelstyle={xshift=-.08in, fill=none}](v0)(v2)
\Edge[label = \footnotesize {}, labelstyle={xshift=-.08in, fill=none}](v7)(v19)
\Edge[label = \footnotesize {}, labelstyle={xshift=.08in, fill=none}](v7)(v4)
\Edge[label = \footnotesize {}, labelstyle={auto=right, fill=none}](v1)(v0)
\Edge[label = \footnotesize {$1$}, labelstyle={xshift=.08in, fill=none}](v6)(v3)
\Edge[label = \footnotesize {}, labelstyle={xshift=.08in, fill=none}](v13)(v4)
\Edge[label = \footnotesize {$2$}, labelstyle={xshift=.08in, fill=none}](v16)(v5)
\Edge[label = \footnotesize {$2$}, labelstyle={xshift=.08in, fill=none}](v18)(v6)
\Edge[label = \footnotesize {$0$}, labelstyle={xshift=-.08in, fill=none}](v3)(v5)
\Edge[label = \footnotesize {$1$}, labelstyle={xshift=-.08in, fill=none}](v5)(v15)
\Edge[label = \footnotesize {$0$}, labelstyle={xshift=-.08in, fill=none}](v6)(v17)
\Edge[label = \footnotesize {}, labelstyle={auto=right, fill=none}](v4)(v14)
\Edge[label = \footnotesize {$3$}, labelstyle={yshift=.08in, fill=none}](v3)(v12)
\end{tikzpicture}
\caption{A partial coloring of $G-st$ in Case 1 of the proof of
Lemma~\ref{lem3}.\label{lem3case1}}
\end{figure}
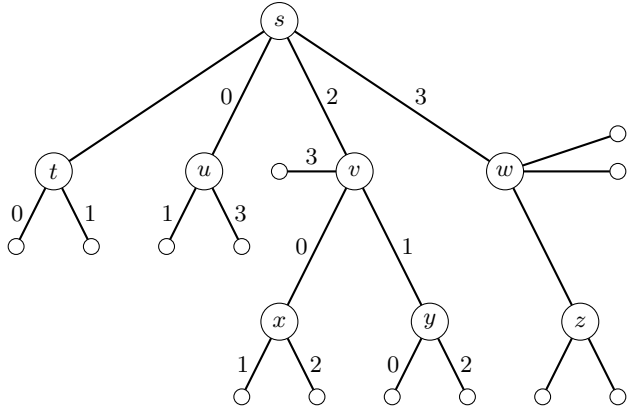

\textbf{Case 1: 0 is used on $\bs{su}$.}  By symmetry, assume that $\vph(su)=0$,
$\vph(sv)=2$, $\vph(sw)=3$.  We will either reach the coloring in
Figure~\ref{lem3case1} (which we show how to extend to $st$ at the end of this
case) or else reduce to Case 2: $\vph(su)\ne 0$.
Since $s$ and $t$ are (1,2)- and (1,3)-linked, we use (1,2)- and
(1,3)-swaps at $u$ to get $u$ missing 2 (without changing colors on edges
incident to $s$ and $t$).

We show that we may assume $\vph(vx)=0$.  Suppose not; by symmetry between $x$ and $y$,
assume that $\vph(vx)=3$, $\vph(vy)=1$, and $\vph(vv')=0$.  
Now $y$ sees 2, since $s$ and $t$ are (1,2)-linked. 
And $u$ must be (0,2)-linked to $t$ (possibly through $w$ and $z$) or else we
(0,2)-swap at $u$ and reduce to Case 2.  If $y$ misses 0, then a (0,2)-swap at
$y$ makes $y$ miss 2 (and thus $s$ and $t$ are (1,2)-unlinked).  So assume
$y$ sees 0.  After a (1,3)-swap at $y$, we have $\vph(vy)=3$ and $\vph(vx)=1$,
with nothing else changed.  So, by the argument above, $x$ sees 0 and 2.  But
now the (1,3)-chain at $x$ ends at $y$.  So either $s$ and $t$ are currently
(1,2)-unlinked, or else they become so after a (1,3)-swap at $x$.  Thus, we
conclude that $\vph(vx)=0$.

Now we show that we may assume $\vph(vy)=1$.  Assume instead that $\vph(vy)=3$ and 
$\vph(vv')=1$.  
Now $x$ sees 2, or else a (0,2)-swap at $x$ reduces to Case 2.
%so assume $\vph(xx')=2$.  
If necessary, use a (1,3)-swap at $x$ to get $x$ missing 3.  
Note that $y$ sees 1, or else a (1,3)-swap at $y$ gives $\vph(vy)=1$.  
If necessary, use a (0,2)-swap at $y$ to get $y$ missing 0.
Now the (0,3)-chain at $x$ ends at $y$.  So either $u$ and $t$ are
(0,2)-unlinked, or else they become so after a (0,3)-swap at $x$.  In either
case, we use a (0,2)-swap at $u$ to reduce to Case 2.  So we must have
$\vph(vx)=0$, $\vph(vy)=1$, $\vph(vv')=3$.

Since $s$ and $t$ are (0,2)- and (1,2)-linked, both $x$ and $y$ see 2.
%Since $u$ and $t$ must be (0,2)-linked (possibly via one or more of
%$w$, $y$, $z$),  
Further, if $y$ misses 0, then after a (0,2)-swap at $y$ vertices
$s$ and $t$ are (1,2)-unlinked.  Thus, $y$ sees 0 and 2, and misses 3.
Suppose $x$ misses 1.  Now the (1,2)-chain $P$ at $x$ must end at $u$ (possibly via $w$
and $z$); otherwise we recolor $P$, which makes $u$ and $t$ (0,2)-unlinked, a
contradiction.  Now recolor $P$ and let $\vph(us)=1$, $\vph(sv)=0$, $\vph(vx)=2$,
$\vph(st)=2$.  
%Now 0 and 1 have traded roles, but $s$ and $t$ are not (0,2)-linked.  In fact,
%the (0,2)-chain at $s$ ends at $x$, so recolor it.  Now let $\vph(st)=2$.  
%
So instead $x$ sees 1 and misses 3, as in Figure~\ref{lem3case1}.  Now recolor the
(1,3)-chains at $x$ and $y$ (possibly the same chain).  Again, the (1,2)-chain
$Q$ at $x$ must end at $u$ (possibly via $w$ and $z$), since otherwise we
recolor it and $u$ and $t$ are (0,2)-unlinked.  Now Recolor $Q$;  as before,
let $\vph(us)=1$, $\vph(sv)=0$, $\vph(vx)=2$, $\vph(st)=2$.
This completes Case 1.

\textbf{Case 2: 0 is not used on $\bs{su}$.}  
By assumption $s$ sees 0.  We show that we may assume $\vph(sw)=0$.  Suppose instead
that $\vph(sv)=0$.  Since $G\in \HH_4$, vertex $w$ has two 3-neighbors.  
If $u\in N(w)$ or $t\in N(w)$, then we have an instance of
Figure~\ref{fig:3reducibles}(a), since $z\notin\{t,u\}$. 
So $\chi'(G)=4$, by Lemma~\ref{lem1}.
Thus, we assume $t,u\notin N(w)$.
%By Corollary~XX, we assume that $t,u\notin N(w)$.  
Now we interchange the roles of $v$ and $w$. (Vertices $v$ and $w$ could have
a common 3-neighbor, but this is not a problem.) So $\vph(sw)=0$.  By symmetry,
assume that $\vph(su)=2$ and $\vph(sv)=3$.  Since $s$ and $t$ are
(1,2)-linked, $u$ must see 1.  If $u$ misses 3, then after a (1,3)-swap at
$u$, vertices $s$ and $t$ are (1,2)-unlinked.  So $u$ must miss 0.  Thus, we
have Figure~\ref{lem3case2}(a), except for colors on edges incident to $z$.

\begin{figure}[!t]
\centering
\subfloat[]{\makebox[.48\textwidth]{
\begin{tikzpicture}[scale = 9]
\tikzstyle{VertexStyle} = []
\tikzstyle{EdgeStyle} = []
\tikzstyle{labeledStyle}=[shape = circle, minimum size = 14pt, inner sep = 1.2pt, draw]
\tikzstyle{unlabeledStyle}=[shape = circle, minimum size = 6pt, inner sep = 1.2pt, draw]
\Vertex[style = labeledStyle, x = 0.600, y = 0.850, L = \footnotesize {$s$}]{v0}
\Vertex[style = labeledStyle, x = 0.300, y = 0.650, L = \footnotesize {$t$}]{v1}
\Vertex[style = labeledStyle, x = 0.500, y = 0.650, L = \footnotesize {$u$}]{v2}
\Vertex[style = labeledStyle, x = 0.700, y = 0.650, L = \footnotesize {$v$}]{v3}
\Vertex[style = labeledStyle, x = 0.900, y = 0.650, L = \footnotesize {$w$}]{v4}
\Vertex[style = labeledStyle, x = 0.600, y = 0.450, L = \footnotesize {$x$}]{v5}
\Vertex[style = labeledStyle, x = 0.800, y = 0.450, L = \footnotesize {$y$}]{v6}
\Vertex[style = labeledStyle, x = 1.000, y = 0.450, L = \footnotesize {$z$}]{v7}
\Vertex[style = unlabeledStyle, x = 0.250, y = 0.550, L = \tiny {}]{v8}
\Vertex[style = unlabeledStyle, x = 0.350, y = 0.550, L = \tiny {}]{v9}
\Vertex[style = unlabeledStyle, x = 0.450, y = 0.550, L = \tiny {}]{v10}
\Vertex[style = unlabeledStyle, x = 0.550, y = 0.550, L = \tiny {}]{v11}
\Vertex[style = unlabeledStyle, x = 0.600, y = 0.650, L = \tiny {}]{v12}
\Vertex[style = unlabeledStyle, x = 1.050, y = 0.700, L = \tiny {}]{v13}
\Vertex[style = unlabeledStyle, x = 1.050, y = 0.650, L = \tiny {}]{v14}
\Vertex[style = unlabeledStyle, x = 0.550, y = 0.350, L = \tiny {}]{v15}
\Vertex[style = unlabeledStyle, x = 0.650, y = 0.350, L = \tiny {}]{v16}
\Vertex[style = unlabeledStyle, x = 0.750, y = 0.350, L = \tiny {}]{v17}
\Vertex[style = unlabeledStyle, x = 0.850, y = 0.350, L = \tiny {}]{v18}
\Vertex[style = unlabeledStyle, x = 0.950, y = 0.350, L = \tiny {}]{v19}
\Vertex[style = unlabeledStyle, x = 1.050, y = 0.350, L = \tiny {}]{v20}
\Edge[label = \tiny {}, labelstyle={auto=right, fill=none}](v1)(v0)
%\Edge[label = \small {$3$}, labelstyle={auto=right, fill=none}](v3)(v0)
\Edge[label = \footnotesize {$3$}, labelstyle={xshift=.08in, fill=none}](v3)(v0)
\Edge[label = \footnotesize {$0$}, labelstyle={xshift=.15in, fill=none}](v4)(v0)
\Edge[label = \footnotesize {$0$}, labelstyle={xshift=-.08in, fill=none}](v1)(v8)
\Edge[label = \footnotesize {$1$}, labelstyle={xshift=.08in, fill=none}](v9)(v1)
\Edge[label = \footnotesize {$3$}, labelstyle={xshift=.08in, fill=none}](v11)(v2)
\Edge[label = \footnotesize {$2$}, labelstyle={xshift=.08in, fill=none}](v20)(v7)
\Edge[label = \footnotesize {$1$}, labelstyle={xshift=-.08in, fill=none}](v2)(v10)
\Edge[label = \footnotesize {$2$}, labelstyle={xshift=-.08in, fill=none}](v0)(v2)
\Edge[label = \footnotesize {$1$}, labelstyle={xshift=-.08in, fill=none}](v7)(v19)
\Edge[label = \footnotesize {$3$}, labelstyle={xshift=.08in, fill=none}](v7)(v4)
\Edge[label = \small {}, labelstyle={auto=right, fill=none}](v6)(v3)
\Edge[label = \small {}, labelstyle={auto=right, fill=none}](v13)(v4)
\Edge[label = \small {}, labelstyle={auto=right, fill=none}](v16)(v5)
\Edge[label = \small {}, labelstyle={auto=right, fill=none}](v18)(v6)
\Edge[label = \small {}, labelstyle={auto=right, fill=none}](v3)(v5)
\Edge[label = \small {}, labelstyle={auto=right, fill=none}](v5)(v15)
\Edge[label = \small {}, labelstyle={auto=right, fill=none}](v6)(v17)
\Edge[label = \small {}, labelstyle={auto=right, fill=none}](v4)(v14)
\Edge[label = \small {}, labelstyle={auto=right, fill=none}](v3)(v12)
\end{tikzpicture}
}}
\subfloat[]{\makebox[.48\textwidth]{
\begin{tikzpicture}[scale = 9]
\tikzstyle{VertexStyle} = []
\tikzstyle{EdgeStyle} = []
\tikzstyle{unlabeledStyle}=[shape = circle, minimum size = 6pt, inner sep = 1.2pt, draw]
\tikzstyle{labeledStyle}=[shape = circle, minimum size = 14pt, inner sep = 1.2pt, draw]
%
%\begin{scope}[xshift=.36in]
\Vertex[style = labeledStyle, x = 0.600, y = 0.850, L = \footnotesize {$s$}]{v0}
\Vertex[style = labeledStyle, x = 0.300, y = 0.650, L = \footnotesize {$t$}]{v1}
\Vertex[style = labeledStyle, x = 0.500, y = 0.650, L = \footnotesize {$u$}]{v2}
\Vertex[style = labeledStyle, x = 0.700, y = 0.650, L = \footnotesize {$v$}]{v3}
\Vertex[style = labeledStyle, x = 0.900, y = 0.650, L = \footnotesize {$w$}]{v4}
\Vertex[style = labeledStyle, x = 0.600, y = 0.450, L = \footnotesize {$x$}]{v5}
\Vertex[style = labeledStyle, x = 0.800, y = 0.450, L = \footnotesize {$y$}]{v6}
\Vertex[style = labeledStyle, x = 1.000, y = 0.450, L = \footnotesize {$z$}]{v7}
\Vertex[style = unlabeledStyle, x = 0.250, y = 0.550, L = \tiny {}]{v8}
\Vertex[style = unlabeledStyle, x = 0.350, y = 0.550, L = \tiny {}]{v9}
\Vertex[style = unlabeledStyle, x = 0.450, y = 0.550, L = \tiny {}]{v10}
\Vertex[style = unlabeledStyle, x = 0.550, y = 0.550, L = \tiny {}]{v11}
\Vertex[style = unlabeledStyle, x = 0.600, y = 0.650, L = \tiny {}]{v12}
%\Vertex[style = unlabeledStyle, x = 1.050, y = 0.700, L = \tiny {}]{v13}
\Vertex[style = unlabeledStyle, x = 1.050, y = 0.650, L = \tiny {}]{v14}
\Vertex[style = unlabeledStyle, x = 0.550, y = 0.350, L = \tiny {}]{v15}
\Vertex[style = unlabeledStyle, x = 0.650, y = 0.350, L = \tiny {}]{v16}
%\Vertex[style = unlabeledStyle, x = 0.750, y = 0.350, L = \tiny {}]{v17}
\Vertex[style = unlabeledStyle, x = 0.800, y = 0.350, L = \tiny {}]{v18}
\Vertex[style = unlabeledStyle, x = 0.950, y = 0.350, L = \tiny {}]{v19}
\Vertex[style = unlabeledStyle, x = 1.050, y = 0.350, L = \tiny {}]{v20}
\Edge[label = \footnotesize {$3$}, labelstyle={xshift=.08in, fill=none}](v3)(v0)
\Edge[label = \footnotesize {$0$}, labelstyle={xshift=.15in, fill=none}](v4)(v0)
\Edge[label = \footnotesize {$0$}, labelstyle={xshift=-.08in, fill=none}](v1)(v8)
\Edge[label = \footnotesize {$1$}, labelstyle={xshift=.08in, fill=none}](v9)(v1)
\Edge[label = \footnotesize {$3$}, labelstyle={xshift=.08in, fill=none}](v11)(v2)
\Edge[label = \footnotesize {$3$}, labelstyle={xshift=.08in, fill=none}](v20)(v7)
\Edge[label = \footnotesize {$1$}, labelstyle={xshift=-.08in, fill=none}](v2)(v10)
\Edge[label = \footnotesize {$2$}, labelstyle={xshift=-.08in, fill=none}](v0)(v2)
\Edge[label = \footnotesize {$1$}, labelstyle={xshift=-.08in, fill=none}](v7)(v19)
\Edge[label = \footnotesize {$2$}, labelstyle={xshift=.08in, fill=none}](v7)(v4)
\Edge[label = \footnotesize {$3$}, labelstyle={xshift=.08in, fill=none}](v6)(v4)
\Edge[label = \footnotesize {$1$}, labelstyle={yshift=.08in, fill=none}](v4)(v14)
\Edge[label = \small {}, labelstyle={auto=right, fill=none}](v1)(v0)
\Edge[label = \small {}, labelstyle={auto=right, fill=none}](v6)(v3)
%\Edge[label = \small {$1$}, labelstyle={auto=right, fill=none}](v13)(v4)
\Edge[label = \small {}, labelstyle={auto=right, fill=none}](v16)(v5)
\Edge[label = \small {}, labelstyle={auto=right, fill=none}](v18)(v6)
\Edge[label = \small {}, labelstyle={auto=right, fill=none}](v3)(v5)
\Edge[label = \small {}, labelstyle={auto=right, fill=none}](v5)(v15)
%\Edge[label = \small {}, labelstyle={auto=right, fill=none}](v6)(v17)
\Edge[label = \small {}, labelstyle={auto=right, fill=none}](v3)(v12)
%\end{scope}
\end{tikzpicture}
}}
\captionsetup{%  settings just for this caption
  margin = 74pt
}
\caption{Two partial colorings of $G-st$ in Case 2 of Lemma~\ref{lem3}.  (a) A
partial coloring of $G-st$. (b) A partial coloring of $G-st$, when $G$ also has
edge $wy$.\label{lem3case2}}
\end{figure}
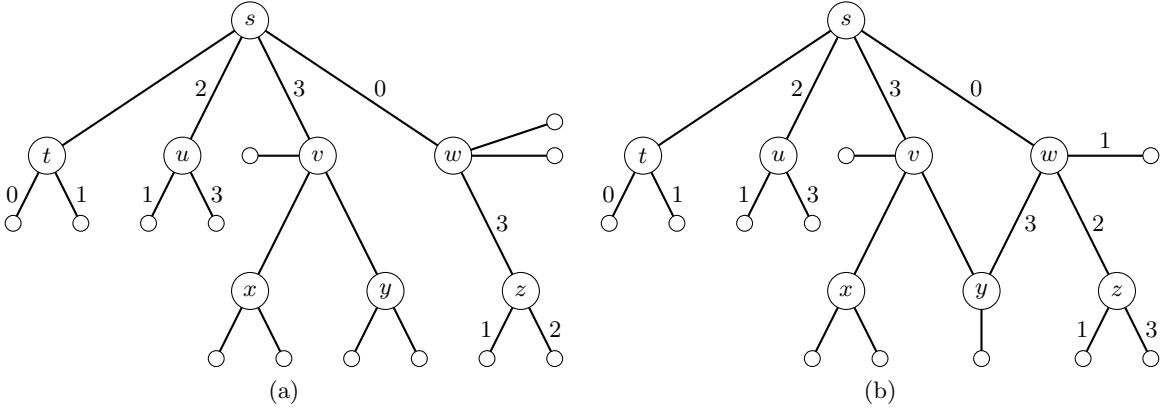

We show that we may assume we have either Figure~\ref{lem3case2}(a) or else
Figure~\ref{lem3case2}(b) with $y$ missing 0.  Note that $u$ and $s$ are 
(0,1)-linked, since otherwise we (0,1)-swap at $u$ and finish as above.
First, we get $z$ missing 0.  If $z$ sees 0 and misses 1, then we (0,1)-swap
at $z$.  Otherwise, if $z$ sees 0 it misses 2 or 3, so after a (1,2)- or
(1,3)-swap, $z$ misses 1.  These swaps at $z$ do not change the colors
on edges incident to $s$, $t$, or $u$, since $s$ and $t$ are (1,2)- and
(1,3)-linked and $s$ and $u$ are (0,1)-linked.  Thus $z$ misses 0.
Now consider $\vph(wz)$.
If $\vph(wz)=1$, then the (0,1)-chain at $z$ ends at $s$, so we recolor it and
let $\vph(su)=0$ and $\vph(st)=2$.  If $\vph(wz)=3$, then we are in
Figure~\ref{lem3case2}(a).  So assume $\vph(wz)=2$.

Now consider the 3-neighbor $\hat{z}$ of $w$, other than $z$.  As in the first
%Corollary~\ref{cor1}, 
paragraph of Case 2, we know $\hat{z}\notin\{t,u\}$.  First suppose
$\hat{z}\notin\{x,y\}$.  Now we essentially repeat the argument above, with
$\hat{z}$ in place of $z$.  Suppose $\vph(w\hat{z})=3$.  If $\hat{z}$ misses 0,
then we have Figure~\ref{lem3case2}(a), with $\hat{z}$ in place of $z$.  If $\hat{z}$
misses 1, then a (0,1)-swap at $\hat{z}$ gives that $\hat{z}$ misses 0, and we
again reach Figure~\ref{lem3case2}(a); this could make $z$ miss 1, but that is
irrelevant. If $\hat{z}$ misses 2, then a (1,2)-swap at $\hat{z}$ gives that
$\hat{z}$ misses 1.  
So instead assume $\vph(w\hat{z})=1$.  If $\hat{z}$ misses 0, then we use a
(0,1)-swap at $\hat{z}$, as above.  If $\hat{z}$ misses 2, then a (1,2)-swap at
$\hat{z}$ makes $\vph(wz)=1$ and $z$ still misses 0, so we are done.  So assume
$\hat{z}$ misses 3.  Now a (1,3)-swap at $\hat{z}$ reduces to the case above where
$\vph(w\hat{z})=3$.
This concludes the case where $\hat{z}\notin\{x,y\}$.  
Now suppose $\hat{z}\in\{x,y\}$; by symmetry, assume that $\hat{z}=y$.
This case is identical, except that we end in Figure~\ref{lem3case2}(b) with $y$
missing 0.  Thus, we may assume we have either Figure~\ref{lem3case2}(a) or
Figure~\ref{lem3case2}(b) with $y$ missing 0.
We first consider the latter case, since the argument is simpler.
%Now we show that we can assume that $z$ misses 0.  If $z$ misses 1, then after a
%(0,1)-swap at $z$, $z$ misses 0.  If $z$ misses 2 or 3, then after a (1,2)- or
%(1,3)-swap at $z$, $z$ misses 1.  So we conclude that $z$ misses 0.  If
%$\vph(wz)=1$, then the (0,1)-chain at $s$ ends at $z$.  Now recoloring it allows
%us to let $\vph(us)=0$ and $\vph(st)=2$.  
%So either $\vph(wz)=3$ or $\vph(wz)=2$.  

\textbf{Case 2a: we have Figure~\ref{lem3case2}(b) with $\bs{y}$ missing 0.}
%So now assume that we have Figure~\ref{lem3case2}(a), with
%$\vph(vy),\vph(yy')\in\{1,2\}$.  
Clearly $\vph(vx)$ is 2, 1, or 0.
First suppose that $\vph(vx)=2$, which implies $\vph(vy)=1$.
Now let $\vph(vy)=3$, $\vph(yw)=0$, $\vph(ws)=3$, $\vph(sv)=1$, $\vph(su)=0$,
$\vph(st)=2$.  So $\vph(vx)\ne 2$. In what follows, we often use variations on
this recoloring idea, typically letting $\vph(vy)=\vph(ws)=3$ and $\vph(wy)=0$,
and also recoloring some other edges.

Suppose instead that $\vph(vx)=1$, which implies $\vph(vy)=2$; recall that $y$
misses 0.  Now $x$ must see 3, so $x$ misses 2 or 0.  If $x$ misses 2, then
let $\vph(vx)=2$, $\vph(vy)=3$, $\vph(yw)=0$, $\vph(ws)=3$, $\vph(sv)=1$,
$\vph(su)=0$, and $\vph(st)=2$.  So assume $x$ sees 2 and misses 0.  Consider
the (0,2)-chains at $t$, $v$, and $x$, and let $P$ be the chain that ends at
$\infty$.  If $P$ starts at $x$, then recoloring $P$ reduces to the previous
case, where $x$ misses 2.  If $P$ starts at $v$, then we recolor $P$ and use
nearly the same coloring as above; the only difference is that we let
$\vph(vx)=0$ (rather than $\vph(vx)=2$).  So we assume that $P$ starts at $t$,
and recolor it.

Now consider the (0,1)-chains in $G-E(H)$ at $t$, $u$, $v$, $w$, $y$, and $z$.
If the chain at $t$ ends at $u$, $v$, $y$, or $z$, then we recolor it (and let
$\vph(vx)=0$ if it ends at $v$) and let $\vph(st)=1$.  So assume $t$ and $w$ are
(0,1)-linked.  Now consider the (0,1)-chain $P$ at $u$.  If $P$ ends at $z$,
then recolor it and let $\vph(su)=1$, $\vph(sv)=2$, $\vph(vy)=3$, $\vph(yw)=0$,
$\vph(ws)=3$, $\vph(st)=0$.  If $P$ ends at $v$, then the only difference is
that we also let $\vph(vx)=0$.  If $P$ ends at $y$, then nearly the
same idea works.  Now we recolor $P$, and let $\vph(su)=1$, $\vph(sv)=2$,
$\vph(vy)=3$, $\vph(yw)=2$, $\vph(wz)=0$, $\vph(ws)=3$, $\vph(st)=0$.  This
finishes the case when $\vph(vx)=1$.

Finally, assume that $\vph(vx)=0$; again recall that $y$ misses 0.  If
$\vph(vy)=1$, then $\vph(vv')=2$.  In this case, let $\vph(vy)=3$,
$\vph(yw)=0$, $\vph(ws)=3$, $\vph(sv)=1$, $\vph(su)=0$, and $\vph(st)=2$.  So
assume instead that $\vph(vy)=2$ and $\vph(vv')=1$.  
We show that if $x$ does not miss 2, then we can use a (1,2)-swap at $x$ (possibly
preceded by a (1,3)-swap at $x$) to assume that $x$ misses 2, as follows.
If a (1,3)-chain starts at $x$, then recoloring it cannot recolor edges incident
to $w$, since this would make $s$ and $u$ become (0,1)-unlinked.  The same is
true for a (1,2)-chain.  If the (1,2)-chain $P$ at $v$ in $G-vy$ does not end at
$t$ or $u$, then we recolor it and let $\vph(vy)=3$, $\vph(yw)=0$, $\vph(ws)=3$,
$\vph(sv)=1$, $\vph(su)=0$, and $\vph(st)=2$.  So $P$ must end at $t$ or $u$.
Since $t$ and $u$ are (1,2)-linked in $G$, the (1,2)-chain at $y$ in $G-vy$ also
ends at $t$ or $u$.  Thus, a (1,2)-swap at $x$ does not recolor edges incident
to $t$, $u$, $v$, $w$, $y$, or $z$.
Hence, by using a (1,2)- and (1,3)-swap at
$x$, we can assume that $x$ misses 2.  Consider the (0,1)-chains in $G-E(H)$ at
$u$, $v$, $w$, $x$, $y$, $z$.  Recall that $u$ and $w$ must be (0,1)-linked
(possibly through $v$ and $x$) or else we can recolor the (0,1)-chain at $u$ and
let $\vph(us)=1$ and $\vph(st)=2$.  So clearly neither $u$ nor $w$ is
(0,1)-linked to either $y$ or $z$.  Further, neither $u$ nor $w$ is (0,1)-linked
to $x$, since we can recolor that (0,1)-chain, let $\vph(vx)=2$, $\vph(vy)=0$,
and proceed as before.  So $u$ and $w$ must be (0,1)-linked.  Thus, $v$ is
(0,1)-linked to $x$, $y$, or $z$.  Let $P$ be the (0,1)-chain in $G-E(H)$ at
$v$.  If $P$ ends at $x$ or $z$, then recolor it and let $\vph(vx)=2$,
$\vph(vy)=3$, $\vph(yw)=0$, $\vph(ws)=3$, $\vph(sv)=1$, $\vph(su)=0$, and
$\vph(st)=2$.  So assume instead that $P$ ends at $y$.  Now recolor $P$ and let
$\vph(vx)=2$, $\vph(vy)=3$, $\vph(yw)=2$, $\vph(wz)=0$, $\vph(ws)=3$,
$\vph(sv)=1$, $\vph(su)=0$, and $\vph(st)=2$.  This completes Case 2a.

\textbf{Case 2b: we have Figure~\ref{lem3case2}(a).}
%Suppose $\vph(wz)=3$.  By symmetry, assume $\vph(zz')=1$ and $\vph(zz'')=2$.
If a (1,3)-swap elsewhere ever recolors $wz$, then we can
finish as in the second paragraph of Case 2, when $\vph(wz)=1$.  So we assume
this never happens.  We show that we may assume all edges are colored as
in Figure~\ref{lem3case2b}.  After that, the proof is easy, as we show in the
final paragraph below.  First, we show that
$\vph(vx)=1$.  Suppose not.  By symmetry between $x$ and $y$, we assume %that
$\vph(vx)=2$, $\vph(vy)=0$, and $\vph(vv')=1$. If $x$ misses 1, then after a
(1,2)-swap at $x$, we have $\vph(vx)=1$, as desired.  If $x$ misses 3 and sees
1, then a (1,3)-swap at $x$ gets $x$ missing 1.  So assume $x$ misses 0.
%$\vph(xx')=1$ and $\vph(xx'')=3$.  
Note that $u$ and $t$ must be (0,3)-linked; otherwise we 
use a (0,3)-swap at $u$, followed by a (1,3)-swap at $u$, which results in $s$
and $t$ being (1,2)-unlinked, a contradiction.  So $y$ must see 3; otherwise a
(0,3)-swap at $x$ gives $x$ missing 3 (since the (0,3)-chain at $x$ does not
interact with other edges shown colored 0 or 3).  
Now $y$ also sees either 1 or 2.  We assume $y$ sees 2;
otherwise, we use a (1,2)-swap at $y$ (if this
recolors $vx$, then we have $\vph(vx)=1$, as desired; so
assume not).  The (0,1)-chain at $y$ must end at $z$; otherwise, we
recolor it, and have $\vph(vy)=1$, as desired.  So we can recolor the
(0,1)-chain at $x$ without effecting any other edges shown.  Now after a
(1,2)-swap at $x$, we have $\vph(vx)=1$, as desired.  Since $s$ and $t$ are
(1,3)-linked, $x$ also sees 3.  %Assume $\vph(xx')=3$.

Now we show that we may assume $\vph(vy)=2$ and $\vph(vv')=0$.  Assume to the
contrary that $\vph(vy)=0$ and $\vph(vv')=2$.  If $x$ misses 0 and $y$ misses
3, then a (0,1)-swap at $x$ makes $s$ and $t$ be (1,3)-unlinked, a
contradiction.  If $x$ misses 0 and $y$ misses 1, then a (1,3)-swap at
$y$ causes $y$ to miss 3 (as in the previous sentence).  So the four
possibilities for the ordered pair of colors missed at $x$ and $y$
% $(\vph(xx''), \vph(yy'), \vph(yy''))$ are (0,1,2), (0,1,3),
%(0,2,3), (2,1,3).  
are (0,2), (2,1), (2,2), (2,3).
We reduce to the cases (2,2) and (2,3), as follows.  In the
case (2,1), a (1,3)-swap at $y$ yields (2,3), as desired.  Suppose we are
in the case (0,2), and use a (1,2)-swap at $y$.  This must recolor the path
through $vx$; otherwise we are in the case (0,1), handled above.  Now the
(0,1)-chain at $y$ must end at $z$, or we recolor it.  So we can
use a (0,1)-swap at $x$.  Now (1,2)-swaps at $x$ and $y$ yield the case (2,2).
So it suffices to handle the cases (2,2) and (2,3).
%handle the latter.

\begin{figure}
\centering
\begin{tikzpicture}[scale = 10]
\tikzstyle{VertexStyle} = []
\tikzstyle{EdgeStyle} = []
\tikzstyle{labeledStyle}=[shape = circle, minimum size = 14pt, inner sep = 1.2pt, draw]
\tikzstyle{unlabeledStyle}=[shape = circle, minimum size = 6pt, inner sep = 1.2pt, draw]
\Vertex[style = labeledStyle, x = 0.600, y = 0.850, L = \footnotesize {$s$}]{v0}
\Vertex[style = labeledStyle, x = 0.300, y = 0.650, L = \footnotesize {$t$}]{v1}
\Vertex[style = labeledStyle, x = 0.500, y = 0.650, L = \footnotesize {$u$}]{v2}
\Vertex[style = labeledStyle, x = 0.700, y = 0.650, L = \footnotesize {$v$}]{v3}
\Vertex[style = labeledStyle, x = 0.900, y = 0.650, L = \footnotesize {$w$}]{v4}
\Vertex[style = labeledStyle, x = 0.600, y = 0.450, L = \footnotesize {$x$}]{v5}
\Vertex[style = labeledStyle, x = 0.800, y = 0.450, L = \footnotesize {$y$}]{v6}
\Vertex[style = labeledStyle, x = 1.000, y = 0.450, L = \footnotesize {$z$}]{v7}
\Vertex[style = unlabeledStyle, x = 0.250, y = 0.550, L = \tiny {}]{v8}
\Vertex[style = unlabeledStyle, x = 0.350, y = 0.550, L = \tiny {}]{v9}
\Vertex[style = unlabeledStyle, x = 0.450, y = 0.550, L = \tiny {}]{v10}
\Vertex[style = unlabeledStyle, x = 0.550, y = 0.550, L = \tiny {}]{v11}
\Vertex[style = unlabeledStyle, x = 0.600, y = 0.650, L = \tiny {}]{v12}
\Vertex[style = unlabeledStyle, x = 1.050, y = 0.700, L = \tiny {}]{v13}
\Vertex[style = unlabeledStyle, x = 1.050, y = 0.650, L = \tiny {}]{v14}
\Vertex[style = unlabeledStyle, x = 0.550, y = 0.350, L = \tiny {}]{v15}
\Vertex[style = unlabeledStyle, x = 0.650, y = 0.350, L = \tiny {}]{v16}
\Vertex[style = unlabeledStyle, x = 0.750, y = 0.350, L = \tiny {}]{v17}
\Vertex[style = unlabeledStyle, x = 0.850, y = 0.350, L = \tiny {}]{v18}
\Vertex[style = unlabeledStyle, x = 0.950, y = 0.350, L = \tiny {}]{v19}
\Vertex[style = unlabeledStyle, x = 1.050, y = 0.350, L = \tiny {}]{v20}
\Edge[label = \footnotesize {$3$}, labelstyle={xshift=.08in, fill=none}](v3)(v0)
\Edge[label = \footnotesize {$0$}, labelstyle={xshift=.15in, fill=none}](v4)(v0)
\Edge[label = \footnotesize {$0$}, labelstyle={xshift=-.08in, fill=none}](v1)(v8)
\Edge[label = \footnotesize {$1$}, labelstyle={xshift=.08in, fill=none}](v9)(v1)
\Edge[label = \footnotesize {$3$}, labelstyle={xshift=.08in, fill=none}](v11)(v2)
\Edge[label = \footnotesize {$2$}, labelstyle={xshift=.08in, fill=none}](v20)(v7)
\Edge[label = \footnotesize {$1$}, labelstyle={xshift=-.08in, fill=none}](v2)(v10)
\Edge[label = \footnotesize {$2$}, labelstyle={xshift=-.08in, fill=none}](v0)(v2)
\Edge[label = \footnotesize {$1$}, labelstyle={xshift=-.08in, fill=none}](v7)(v19)
\Edge[label = \footnotesize {$3$}, labelstyle={xshift=.08in, fill=none}](v7)(v4)
\Edge[label = \footnotesize {$2$}, labelstyle={yshift=-.08in, fill=none}](v4)(v14)
\Edge[label = \footnotesize {}, labelstyle={auto=right, fill=none}](v1)(v0)
\Edge[label = \footnotesize {$2$}, labelstyle={xshift=.08in, fill=none}](v6)(v3)
\Edge[label = \footnotesize {$1$}, labelstyle={yshift=.08in, fill=none}](v13)(v4)
\Edge[label = \footnotesize {$2$}, labelstyle={xshift=.08in, fill=none}](v16)(v5)
\Edge[label = \footnotesize {$1$}, labelstyle={xshift=.08in, fill=none}](v18)(v6)
\Edge[label = \footnotesize {$1$}, labelstyle={xshift=-.08in, fill=none}](v3)(v5)
\Edge[label = \footnotesize {$3$}, labelstyle={xshift=-.08in, fill=none}](v5)(v15)
\Edge[label = \footnotesize {$0$}, labelstyle={xshift=-.08in, fill=none}](v6)(v17)
\Edge[label = \footnotesize {$0$}, labelstyle={yshift=.08in, fill=none}](v3)(v12)
%\Edge[label = \footnotesize {$3$}, labelstyle={auto=right, fill=none}](v3)(v0)
%\Edge[label = \footnotesize {$0$}, labelstyle={auto=right, fill=none}](v4)(v0)
%\Edge[label = \footnotesize {$0$}, labelstyle={auto=right, fill=none}](v1)(v8)
%\Edge[label = \footnotesize {$1$}, labelstyle={auto=right, fill=none}](v9)(v1)
%\Edge[label = \footnotesize {$3$}, labelstyle={auto=right, fill=none}](v11)(v2)
%\Edge[label = \footnotesize {$2$}, labelstyle={auto=right, fill=none}](v20)(v7)
%\Edge[label = \footnotesize {$1$}, labelstyle={auto=right, fill=none}](v2)(v10)
%\Edge[label = \footnotesize {$2$}, labelstyle={auto=right, fill=none}](v0)(v2)
%\Edge[label = \footnotesize {$1$}, labelstyle={auto=right, fill=none}](v7)(v19)
%\Edge[label = \footnotesize {$3$}, labelstyle={auto=right, fill=none}](v7)(v4)
%\Edge[label = \footnotesize {$2$}, labelstyle={auto=right, fill=none}](v4)(v14)
\end{tikzpicture}
\caption{A coloring of $G-st$ in Case 2 of the proof of
Lemma~\ref{lem3}.\label{lem3case2b}}
\end{figure}
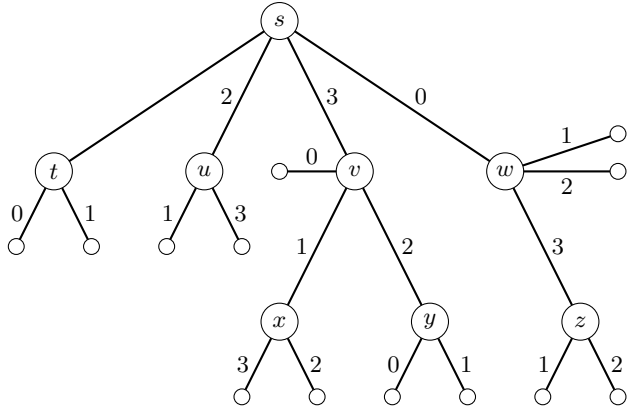

Suppose we are in the case (2,2); that is, both $x$ and $y$ miss 2.
Use (1,2)-swaps at $x$ and $y$, followed by (1,3)-swaps at $x$
and $y$.  Now at $x$ we use a (0,3)-swap; this cannot recolor edges incident to
$t$ or $u$, since $t$ and $u$ must be (0,3)-linked, as in the first paragraph of
Case 2b.  Consider the (0,1)-chain $P$ at $v$ in $G-vy$.  If $P$ ends at $u$,
$x$, $y$, or $z$, then recolor $P$ and let $\vph(vy)=3$, $\vph(vs)=1$, and
$\vph(st)=3$.  So assume $P$ ends at $w$.  Recall that $s$ and $u$ must be
(0,1)-linked in $G$.  Since $P$ ends at $w$, the (0,1)-chain $Q$ at $u$ must
end at $y$.  Recolor $Q$ and let $\vph(vy)=3$, $\vph(vx)=0$, $\vph(vs)=2$,
$\vph(su)=1$, and $\vph(st)=3$.
%a (0,1)-swap, and a (1,2)-swap (the
%% the (0,3)-swap can't effect t or u, or we can get s and t (1,2)-unlinked
%(0,1)-swap cannot recolor $vy$, since this makes $s$ and $t$
%(1,3)-unlinked).  This yields the case (2,3).

Finally, assume we are in the case (2,3).
Consider the (0,1)-chains in $G-E(H)$ starting at $u$, $w$, $x$, $y$, $z$.
Let $P$ be the chain starting at $x$.  
If $P$ does not end at $w$, then recolor $P$, and let $\vph(vx)=0$,
$\vph(vy)=3$, $\vph(vs)=1$, and $\vph(st)=3$.
%If $P$ ends at $y$, then recolor it, and let $\vph(vx)=0$ and $\vph(vy)=1$. 
%Now $s$ and $t$ are (1,3)-unlinked, a contradiction.  If $P$ ends at $\infty$,
%then recolor it and let $\vph(vx)=0$, $\vph(vy)=3$, $\vph(sv)=1$, $\vph(st)=3$.
% The same process works if $P$ ends at $u$ or $z$.  
So $P$ must end at $w$.  Let $Q$ be the (0,1)-chain starting at $u$.  If $Q$
ends at $z$ or $\infty$, then recolor $Q$ and let $\vph(us)=1$ and
$\vph(st)=2$.  %The same process works if $Q$ ends at $z$.
So $Q$ must end at $y$.  Now recolor $Q$, and let $\vph(yv)=3$,
$\vph(vs)=0$, $\vph(sw)=3$, $\vph(wz)=0$, $\vph(us)=1$, $\vph(st)=2$.
Thus, we conclude that $\vph(vy)=2$, and so $\vph(vv')=0$.

Now we need only to show that the colors missing at $x$ and $y$ are as in
Figure~\ref{lem3case2b}.  Since $x$ and $t$ are (1,3)-linked, $x$ sees 3,
so we have 6 possibilities for these missing colors.  If $x$ misses 2
and $y$ misses 3, then a (1,2)-swap at $x$ makes $s$ and $t$ %be 
(1,3)-unlinked.  If $x$ misses 2 and $y$ misses 1, then a (1,3)-swap at $y$
takes us to the previous sentence.  So suppose $x$ misses 2 and $y$
misses 0.  Now a (1,2)-swap at $x$ (and interchanging the roles of $x$ and $y$)
yields the case that $x$ misses 0 and $y$ misses 1.  Thus, we assume that
$x$ misses 0.  Suppose that $y$ misses 0.  Now $x$ must be (0,3)-linked to $v$;
otherwise a (0,3)-swap at $x$ makes $s$ and $t$ be (1,3)-unlinked, a
contradiction.  Now we (0,3)-swap at $y$, after which $y$ misses 3.  This
gives the colors in Figure~\ref{lem3case2b}.  So assume $x$ misses 0 and $y$
misses 1.  Now we (1,3)-swap at $y$, which again gives the colors in
Figure~\ref{lem3case2b}.

Finally, we show that if the colors are as in Figure~\ref{lem3case2b}, then $G$
has a coloring.  Consider the (1,2)-chains in $G-E(H)$ that start at
$t$, $u$, $x$, $y$.  If the (1,2)-chain at $y$ ends at $x$, then recolor it and
let $\vph(vx)=2$ and $\vph(vy)=1$.  Now $s$ and $t$ are (1,3)-unlinked, a
contradiction.  If the (1,2)-chain at $y$ ends at $u$, then recolor it and let
$\vph(yv)=3$, $\vph(vs)=2$, $\vph(su)=1$, $\vph(st)=3$.  So assume the
(1,2)-chain at $y$ ends at $t$. 
%and the (1,2)-chain at $u$ ends at $x$.  
Recolor it and let $\vph(yv)=3$, $\vph(vs)=2$, 
$\vph(sw)=3$, $\vph(wz)=0$, $\vph(su)=0$, $\vph(st)=1$.  
This completes Case 2b, and the proof. %case that $\vph(wz)=3$. 
%(from the start of the second paragraph of Case 3).
\end{proof}

\section*{Acknowledgments}
The first author thanks Beth Cranston for encouragement and support during
both the research and writing phases of this paper.
Thanks also to three anonymous referees; one made suggestions that led to numerous
improvements.

\bibliographystyle{siam}
{\footnotesize{\bibliography{GraphColoring}}
\end{document}